\newtheorem{thm}{Theorem}[section]
\newtheorem{cor}[thm]{Corollary}
\newtheorem{lem}[thm]{Lemma}
\newtheorem{prop}[thm]{Proposition}
\newtheorem{rem}[thm]{Remark}
\newcommand{\Sym}{\mathrm{Sym}}
\newcommand{\Fq}{\mathbb F_q}
\newcommand{\Fqr}{{\mathbb F}_{q^r}}
\newcommand{\CC}{\mathbb C}
\newcommand{\FFq}{{\bar{\mathbb F}_q}}
\newcommand{\QQ}{\bar{\mathbb Q}_\ell}
\newcommand{\R}{\mathrm R}
\newcommand{\Gal}{\mathrm {Gal}}
\newcommand{\AAA}{\mathbb A}
\newcommand{\PP}{\mathbb P}
\newcommand{\FF}{\mathcal F}
\newcommand{\GG}{\mathbb G}
\newcommand{\GGG}{\mathcal G}
\newcommand{\HH}{\mathrm H}
\newcommand{\HHH}{{\mathcal H}}
\newcommand{\LL}{{\mathcal L}}
\newcommand{\Tr}{\mathrm {Tr}}
\newcommand{\Nm}{\mathrm N}
\newcommand{\Frob}{\mathrm{Frob}}
\newcommand{\Swan}{\mathrm{Swan}}
\begin{document}
\title{Estimates for exponential sums with a large automorphism group}
\author{Antonio Rojas-Le\'on}
\address{Departamanto de \'Algebra,
Universidad de Sevilla, Apdo 1160, 41080 Sevilla, Spain}
\address{E-mail: arojas@us.es}
\thanks{Partially supported by P08-FQM-03894 (Junta de Andaluc\'{\i}a), MTM2007-66929 and FEDER}

\begin{abstract}
 We prove some improvements of the classical Weil bound for one variable additive and multiplicative character sums associated to a polynomial over a finite field $k=\Fq$ for two classes of polynomials which are invariant under a large abelian group of automorphisms of the affine line $\AAA^1_k$: those invariant under translation by elements of $k$ and those invariant under homotheties with ratios in a large subgroup of the multiplicative group of $k$. In both cases, we are able to improve the bound by a factor of $\sqrt{q}$ over an extension of $k$ of cardinality sufficiently large compared to the degree of $f$.
\end{abstract}

\maketitle

\section{Introduction}

Let $k=\Fq$ be a finite field with $q$ elements. As a consequence Weil's bound for the number of rational points on a curve over $k$, one can obtain estimates for character sums defined on the affine line $\AAA^1_k$ (cf. \cite{hasse1935theorie},\cite{weil1948some}). Let us describe the precise results.

Let $f\in k[x]$ be a polynomial of degree $d$ and $\psi:k\to\CC^\star$ a non-trivial additive character. Consider the sum $\sum_{x\in k}\psi(f(x))$ (and, more generally, $\sum_{x\in k_r}\psi(\Tr_{k_r/k}(f(x)))$ for a finite extension $k_r$ of $k$ of degree $r$). Then, if $d$ is prime to $p$, we have the estimate
$$
\left|\sum_{x\in k_r}\psi(\Tr_{k_r/k}(f(x)))\right|\leq (d-1)q^{\frac{r}{2}}.
$$

If $d$ is divisible by $p$, we can reduce to the previous case using the following trick. Since $t\mapsto \psi(t^p)$ is a non-trivial additive character, there must be some $a\in k$ such that $\psi(t^p)=\psi(at)$ for every $t\in k$. If $f(x)=a_d x^d+a_{d-1}x^{d-1}+\cdots$ with $d=ep$, let $b_d\in k$ be such that $b_d^p=a_d$, then
$$
\psi(\Tr_{k_r/k}(f(x)))=\psi(\Tr_{k_r/k}((b_dx^e)^p))\psi(\Tr_{k_r/k}(f(x)-a_dx^d))=
$$
$$
=\psi(\Tr_{k_r/k}(b_dx^e)^p)\psi(\Tr_{k_r/k}(f(x)-a_dx^d))=\psi(a\cdot\Tr_{k_r/k}(b_dx^e))\psi(\Tr_{k_r/k}(f(x)-a_dx^d))=
$$
$$
=\psi(\Tr_{k_r/k}(f(x)-a_dx^d+ab_dx^e)).
$$
We keep reducing the polynomial in this way until we get a polynomial with degree $d'$ prime to $p$. Then we apply the prime to $p$ case and obtain an estimate
$$
\left|\sum_{x\in k_r}\psi(\Tr_{k_r/k}(f(x)))\right|\leq (d'-1)q^{\frac{r}{2}}.
$$
except when $d'$ is zero (that is, when $f=c+g^p-ag$ for some constant $c$ and sone $g\in k[x]$). If the character $\psi$ is obtained from a character of the prime subfield ${\mathbb F}_p$ by pulling back via the trace map, then $a=1$.

Similarly, if $\chi:k^\star\to\CC^\star$ is a multiplicative character of order $m>1$ and $f\in k[x]$ is not an $m$-th power, we have an estimate
$$
\left|\sum_{x\in k_r}\chi(\Nm_{k_r/k}(f(x)))\right|\leq (e-1)q^{\frac{r}{2}}\leq (d-1)q^{\frac{r}{2}}
$$
where $e$ is the number of distinct roots of $f$.

In this article we will improve these estimates for a special class of polynomials: those which are either translation invariant or homothety invariant, that is, either $f(x+\lambda)=f(x)$ for every $\lambda\in k$ or $f(\lambda x)=f(x)$ for every $\lambda\in k^\star$ (or every $\lambda$ in a large subgroup of $k^\star$). For such polynomials, there is a large abelian group $G$ of automorphisms of $\AAA^1_k$ such that $f\circ\sigma=f$ for every $\sigma\in G$. 

On the level of $\ell$-adic cohomology, this gives an action of $G$ on the pull-back by $f$ of the Artin-Schreier and Kummer sheaves associated to $\psi$ and $\chi$ respectively \cite[1.7]{deligne569application}, so they induce an action on their cohomology. The character sums can be expressed as the trace of the geometric $k_r$-Frobenius action on this cohomology, by Grothendieck's trace formula. The above estimates are a consequence of the fact that this action has all eigenvalues of archimedean absolute value $\leq q^{\frac{r}{2}}$. Precisely, if $S_r=\sum_{x\in k_r}\psi(\Tr_{k_r/k}(f(x)))$ (respectively $U_r=\sum_{x\in k_r}\chi(\Nm_{k_r/k}(f(x)))$) the $L$-functions
$$
L(\psi,f;T):=\exp\sum_{r\geq 1} S_r\frac{T^r}{r}
$$
and
$$
L(\chi,f;T):=\exp\sum_{r\geq 1} U_r\frac{T^r}{r}
$$
are the polynomials $\det(1-T\cdot\Frob_k|\HH^1_c(\AAA^1_{\bar k},f^\star\LL_\psi))$ and $\det(1-T\cdot\Frob_k|\HH^1_c(\AAA^1_{\bar k},f^\star\LL_\chi))$, of degree $d'-1$ and $e-1$ respectively.

Now under the action of the abelian group $G$, this cohomology splits as a direct sum of eigenspaces for the different characters of $G$. Under certain generic conditions, it is natural to expect some cancellation among the traces of the Frobenius actions on these eigenspaces, thus giving a substantial improvement of Weil's estimate if $G$ is large (namely by a $\sqrt{\# G}$ factor). Compare \cite{rojas-leon-biotwbfac}, where an improvement for the Weil estimate for the number of rational points on Artin-Schreier curves was obtained using the same arguments we apply in this article.

For the translation invariant case (sections 2 ans 3), we obtain this improvement using the local theory of $\ell$-adic Fourier transform \cite{laumon1987transformation} and Katz' computation of the geometric monodromy groups for some families of exponential sums \cite{katz-monodromy}, \cite{katz1990esa}. The argument is similar to that in \cite{rojas-leon-biotwbfac}. For the homothety invariant case (sections 4 and 5), we use Weil descent together with certain properties of the convolution of sheaves on $\GG_{m,k}$.  
 
Throughout this article, $k=\Fq$ will be a finite field of characteristic $p$, $\bar k=\FFq$ a fixed algebraic closure and $k_r=\Fqr$ the unique extension of $k$ of degree $r$ in $\bar k$. We will fix a prime $\ell\neq p$, and work with $\ell$-adic cohomology. In order to speak about weights without ambiguity, we will fix a field isomorphism $\iota:\QQ\to\CC$. We will use this isomorphism to identify $\QQ$ and $\CC$ without making any further mention to it. When we speak about weights, we will mean weights with respect to the chosen isomorphism $\iota$.

\section{Additive character sums for translation invariant polynomials}

Let $f\in k[x]$ be a polynomial. $f$ is said to be \emph{translation invariant} if $f(x+a)=f(x)$ for every $a\in k$.

\begin{lem}\label{translationinvariant}
 Let $f\in k[x]$. The following conditions are equivalent:
\begin{itemize}
 \item[(a)] $f$ is translation invariant.
 \item[(b)] There exists $g\in k[x]$ such that $f(x)=g(x^q-x)$.
\end{itemize}
\end{lem}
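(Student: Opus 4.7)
The implication (b) $\Rightarrow$ (a) is immediate: since every $a \in k$ satisfies $a^q = a$, one has $(x+a)^q - (x+a) = x^q - x$, so $f(x+a) = g((x+a)^q - (x+a)) = g(x^q - x) = f(x)$.

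For (a) $\Rightarrow$ (b) my plan is an induction on $d = \deg f$. When $d < q$, the polynomial $f(x) - f(0)$ has degree $< q$ but vanishes at every element of $k$ (because $f(a) = f(0+a) = f(0)$), so it must be the zero polynomial; hence $f$ is constant and $f(x) = g(x^q - x)$ with $g$ a constant.

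For the inductive step, I would perform polynomial division by $x^q - x$ to write
$$
f(x) = (x^q - x)\, q_1(x) + r_1(x), \qquad \deg r_1 < q.
$$
Using that $(x+a)^q - (x+a) = x^q - x$ for $a \in k$, translation invariance of $f$ gives
$$
0 = f(x+a) - f(x) = (x^q - x)\bigl(q_1(x+a) - q_1(x)\bigr) + \bigl(r_1(x+a) - r_1(x)\bigr).
$$
Since the second summand has degree less than $q$ while the first has degree at least $q$ unless $q_1(x+a) = q_1(x)$, both summands must vanish separately. Thus $q_1$ is translation invariant (and of degree $d - q < d$), and $r_1$ is translation invariant of degree $< q$, hence (by the base case argument) a constant $c$. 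By the induction hypothesis applied to $q_1$, there is $g_1 \in k[x]$ with $q_1(x) = g_1(x^q - x)$, so $f(x) = (x^q - x) g_1(x^q - x) + c = g(x^q - x)$ for $g(y) := y\, g_1(y) + c$.

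I do not expect a serious obstacle here; the only subtlety is the degree comparison that forces $q_1$ and $r_1$ to inherit translation invariance separately, which is what makes the induction on $\deg f$ go through cleanly.
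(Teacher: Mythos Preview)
Your proof is correct and follows essentially the same approach as the paper: Euclidean division by $x^q-x$, a degree comparison forcing the quotient and remainder to be separately translation invariant, and induction on the degree. The argument and the final reassembly $g(y)=y\,g_1(y)+c$ match the paper's proof line for line.
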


\begin{proof}
 $(b)\Rightarrow (a)$ is clear. Suppose that $f$ is translation invariant. If the degree of $f$ is $<q$, the polynomial $f(x)-f(0)$ has at least $q$ roots (all elements of $k$) and degree $<q$, so it is identically zero. So $f$ is the constant polynomial $f(0)$. Otherwise, we can write $f(x)=(x^q-x)h(x)+r(x)$ with $\deg(r)<q$. For every $a\in k$ we have then $f(x+a)=(x^q-x)h(x+a)+r(x+a)=(x^q-x)h(x)+r(x)$, so $(x^q-x)(h(x+a)-h(x))=r(x)-r(x+a)$. Since the right hand side has degree $<q$, we conclude that $h(x+a)-h(x)=r(x+a)-r(x)=0$. $r(x)$ is then translation invariant and therefore constant, for its degree is less than $q$, and $h$ is also translation invariant of degree $\deg(f)-q$. By induction, there is $t\in k[x]$ such that $h(x)=t(x^q-x)$. So we take $g(x)=xt(x)+r$.
\end{proof}

Let $f\in k[x]$ be translation invariant, and $g\in k[x]$ of degree $d$ such that $f(x)=g(x^q-x)$. Let $\psi:k\to\QQ^\star$ be a non-trivial additive character. The Artin-Scheier-reduced degree of $f$ (i.e. the lowest degree of a polynomial which is Artin-Schreier equivalent to $f$) is $q(d-1)+1$ (since $g(x^q-x)=a_dx^{qd}+da_dx^{q(d-1)+1}+(\text{terms of degree }\leq q(d-1))$). Therefore the Weil bound for exponential sums gives
$$
\left|\sum_{x\in k_r}\psi(\Tr_{k_r/k}(f(x)))\right|\leq q(d-1)q^{\frac{r}{2}}=(d-1)q^{\frac{r}{2}+1}
$$

On the other hand, since $f(x)=g(x^q-x)$ we get, for every $r\geq 1$,
$$
\sum_{x\in k_r}\psi(\Tr_{k_r/k}(f(x)))=\sum_{x\in k_r}\psi(\Tr_{k_r/k}(g(x^q-x)))=
$$
$$
=\sum_{t\in k_r}\#\{x\in k_r|x^q-x=t\}\psi(\Tr_{k_r/k}(g(t)))=
$$
$$
=\sum_{t\in k_r}\sum_{u\in k}\psi(u\Tr_{k_r/k}(t))\psi(\Tr_{k_r/k}(g(t)))
=\sum_{u\in k}\sum_{t\in k_r}\psi(\Tr_{k_r/k}(g(t)+ut)).
$$

Consider the $\QQ$-sheaf $\LL_{\psi(g)}:=g^\star\LL_\psi$ on $\AAA^1_k$, where $\LL_\psi$ is the Artin-Schreier sheaf associated to $\psi$. The Fourier transform of the object $\LL_{\psi(g)}[1]$ with respect to $\psi$ \cite{katz1985transformation} is a single sheaf $\FF_g$ placed in degree $-1$. The sheaf $\FF_g$ is irreducible and smooth of rank $d-1$ on $\AAA^1_k$, and totally wild at infinity with a single slope $\frac{d}{d-1}$ and Swan conductor $d$ \cite[Theorem 17]{katz-monodromy}. We have
$$
\sum_{x\in k_r}\psi(\Tr_{k_r/k}(f(x)))=\sum_{x\in k_r}\psi(\Tr_{k_r/k}(g(x^q-x)))
=\sum_{u\in k}\sum_{t\in k_r}\psi(\Tr_{k_r/k}(g(t)+ut))=
$$
\begin{equation}\label{adams}
=-\sum_{u\in k}\Tr(\Frob^r_{k,u}|(\FF_g)_u)=-\sum_{u\in k}\Tr(\Frob_{k,u}|[\FF_g]^r_u)
\end{equation}
where $[\FF_g]^r$ is the $r$-th Adams power of $\FF_g$ \cite{fu2004moment}.

Let $g(x)=\sum_{i=0}^d a_ix^i$. Recall the following facts about the local and global monodromies of the sheaf $\FF_g$:

\begin{enumerate}
 \item Suppose that $p>d$ and $k$ contains all $2(d-1)$-th roots of $-da_d$. Let $u(t)=\sum_{i\geq 0} r_i t^{1-i}\in tk[[t^{-1}]]$ be a power series such that $f'(t)+u(t)^{d-1}=0$ and let $v(t)=\sum_{i\geq 0} s_i t^{1-i}$ be the inverse image of $t$ under the automorphism $k((t^{-1}))\to k((t^{-1}))$ defined by $t^{-1}\mapsto u(t)^{-1}$ (cf. \cite[Proposition 3.1]{fu2007calculation}). Let $h(t)=\sum_{i=0}^d b_it^i$ be the polynomial obtained from $f(v(t))+v(t)t^{d-1}\in t^dk[[t^{-1}]]$ by removing the terms with negative exponent. Then, as a representation of the decomposition group $D_\infty$ at infinity, we have
$$
\FF_g\cong [d-1]_\star(\LL_{\psi(h(t))}\otimes\LL_{\rho^d(s_0t)})\otimes\rho(d(d-1)a_d/2)^{deg}\otimes g(\rho,\psi)^{deg}
$$
where $\rho:k^\star\to\QQ^\star$ is the quadratic character, $g(\rho,\psi)=-\sum_{t\in k}\rho(t)\psi(t)$ the corresponding Gauss sum and $[d-1]_\star:\GG_{m,k}\to\GG_{m,k}$ the $(d-1)$-th power map \cite[Equation 3]{haessig-lospotgafoes}. Notice that $s_0^{d-1}=-1/da_d$.

\item Suppose that $p>2$, and let $G\subseteq \mathrm{GL}(V)$ be the geometric monodromy group of $\FF_g$, where $V$ is its stalk at a geometric generic point. Then by \cite[Propositions 11.1 and 11.6]{such2000monodromy}, either $G$ is finite or $G_0$ (the unit connected component of $G$) is $\mathrm{SL}(V)$ or $\mathrm{Sp}(V)$ in its standard representation. By \cite[proof of Theorem 19]{katz-monodromy}, for $p>d$ the $\mathrm{Sp}$ case occurs if and only if $g(x+c)+d$ is odd for some $c,d\in k$. Moreover for $p>2d-1$ $G$ is never finite by \cite[Theorem 19]{katz-monodromy}. See \cite[Section 2]{haessig-lospotgafoes} for some other criterions that rule out the finite monodromy case in the $p\leq 2d-1$ case.
\end{enumerate}

The determinant of $\FF_g$ is computed over $\bar k$ in \cite[Theorem 17]{katz-monodromy}. In order to obtain a good estimate in the exceptional case below, we need to find its value over $k$.

\begin{lem}\label{determinant}
 Suppose that $p>d$ and $k$ contains all $2(d-1)$-th roots of $-da_d$. Then
$$
\det\FF_g\cong\LL_{\psi((d-1)b_{d-1}t+(d-1)b_0)}\otimes\rho^d(-1)^{deg}\otimes\rho^{d-1}(d(d-1)a_d/2)^{deg}\otimes (g(\rho,\psi)^{d-1})^{deg}
$$
\end{lem}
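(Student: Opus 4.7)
The plan is to verify the claimed formula after restricting both sides to the decomposition group $D_\infty$ at infinity, which suffices because a smooth rank-$1$ sheaf on $\AAA^1_k$ is determined by its restriction to $D_\infty$: the image of $D_\infty$ in $\pi_1(\AAA^1_k)$ contains the inertia at $\infty$, which topologically generates $\pi_1^{\mathrm{geom}}(\AAA^1_{\bar k})$, and surjects onto $\Gal(\bar k/k)$, so it equals all of $\pi_1(\AAA^1_k)$.

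Taking determinants in the local isomorphism recalled in item~(1), the two geometrically constant rank-$1$ factors $\rho(d(d-1)a_d/2)^{deg}$ and $g(\rho,\psi)^{deg}$ multiply the determinant of the rank-$(d-1)$ sheaf $\FF_g$ by their $(d-1)$-th powers, yielding $\rho^{d-1}(d(d-1)a_d/2)^{deg}$ and $(g(\rho,\psi)^{d-1})^{deg}$; these already match two of the factors in the statement. The remaining step is to establish
$$
\det\bigl([d-1]_\star(\LL_{\psi(h(t))}\otimes\LL_{\rho^d(s_0t)})\bigr)\big|_{D_\infty}\cong\LL_{\psi((d-1)b_{d-1}T+(d-1)b_0)}\big|_{D_\infty}\otimes\rho^d(-1)^{deg}.
$$

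For this I would use that $[d-1]\colon\GG_m\to\GG_m$, $t\mapsto T=t^{d-1}$, is finite \'etale Galois with group $\mu_{d-1}\subset k^\star$ (present by the hypothesis on $k$) and totally ramified at $\infty$. Under induction of local representations at $\infty$ along this cover, $\det\mathrm{Ind}(\chi)$ of a character $\chi$ of the source group admits the standard description: its pullback to the source decomposition group equals $\bigotimes_{\zeta\in\mu_{d-1}}\sigma_\zeta^\star\chi$, while the extension to the target group is determined up to the sign character of the $\mu_{d-1}$-permutation representation. The Artin-Schreier factor of $\bigotimes_\zeta\sigma_\zeta^\star(\LL_{\psi(h(t))}\otimes\LL_{\rho^d(s_0t)})$ is $\LL_{\psi(\sum_\zeta h(\zeta t))}$; the sum over $\mu_{d-1}$ annihilates every monomial $b_it^i$ with $(d-1)\nmid i$, leaving $(d-1)(b_0+b_{d-1}t^{d-1})$, which on the target is exactly $(d-1)b_{d-1}T+(d-1)b_0$ as claimed. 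The Kummer factor collapses because $\rho^{d(d-1)}$ is trivial ($d(d-1)$ is always even), reducing to a geometrically constant sheaf whose Frobenius eigenvalue is read off from $s_0^{d-1}=-1/(da_d)$ and $\prod_{\zeta\in\mu_{d-1}}\zeta=(-1)^d$.

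The main obstacle is the sign bookkeeping: one must combine the geometrically constant contribution of $\bigotimes_\zeta\sigma_\zeta^\star\LL_{\rho^d(s_0t)}$, namely $\rho^d(s_0^{d-1}(-1)^d)^{deg}=\rho^d(-(-1)^d/(da_d))^{deg}$, with the sign character of the $\mu_{d-1}$-permutation representation (trivial when $d-1$ is odd, the order-$2$ character of $\mu_{d-1}$ when $d-1$ is even), and verify that after using $\rho^d$ is quadratic and that $d+1$ always has the same parity as $d-1$, all residual Kummer factors cancel and what remains is exactly $\rho^d(-1)^{deg}$, matching the statement.
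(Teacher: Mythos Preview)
Your reduction to $D_\infty$ is valid: indeed $I_\infty$ surjects onto $\pi_1^{\mathrm{geom}}(\AAA^1_{\bar k})$ (any connected finite \'etale cover of $\AAA^1_{\bar k}$ is totally ramified over $\infty$, since $\PP^1$ is simply connected), and hence $D_\infty\twoheadrightarrow\pi_1(\AAA^1_k)$. The paper does not argue this way; it works instead with the honest sheaf on $\GG_{m,k}$ and appeals to its behaviour at $0$.

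The common computational core --- pulling back by $[d-1]$ and simplifying $\bigotimes_{\zeta}\sigma_\zeta^\star\chi$ --- is the same in both approaches, and your handling of the Artin--Schreier and Kummer pieces there is correct. The divergence, and the gap in your proposal, is in how the remaining $\mu_{d-1}$-ambiguity is resolved. You invoke the formula $\det\mathrm{Ind}_H^G\chi=(\chi\circ\mathrm{Ver})\cdot\epsilon$, but you only compute $\chi\circ\mathrm{Ver}$ on $H$ (which is exactly the pullback you already have) and then tacitly identify it on all of $G$ with the ``obvious'' descent $\LL_{\psi((d-1)b_{d-1}T+(d-1)b_0)}\otimes(\text{constant})$. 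That identification is precisely what needs proof: the transfer has a specific value on a lift of a generator of $G/H$, namely $\chi(\gamma_K^{d-1})$, and because the Artin--Schreier factor $\LL_{\psi(h)}$ is nontrivial on wild inertia, this value depends delicately on the lift and does not obviously match your ``obvious'' descent. So when you write that the sign character and the residual Kummer factors ``cancel'' to leave $\rho^d(-1)^{deg}$, you are asserting exactly the $\mu_{d-1}$-triviality that remains to be shown, not deriving it.

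The paper sidesteps this local computation entirely. Having obtained the same pullback identity, it observes that both $\det\FF_g$ and the proposed right-hand side are smooth on all of $\AAA^1_k$, hence unramified at $0$; the discrepancy $\LL_\chi$ (with $\chi^{d-1}=\mathbf 1$) is then a rank-one sheaf on $\GG_{m,k}$ which is tame at $0$ and unramified at $\infty$, so geometrically constant, which forces $\chi$ trivial since it is the only factor that could ramify at $0$. The arithmetic constant is then fixed by matching Frobenius at $\infty$. This global input at $0$ is what replaces your uncompleted transfer computation; if you want a purely local proof at $\infty$, you must actually evaluate $\chi\circ\mathrm{Ver}$ on a coset representative, not only on $H$.
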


\begin{proof}
 Note that the result is compatible with \cite[Theorem 17]{katz-monodromy}, since $b_{d-1}=a_{d-1}s_0^{d-1}=a_{d-1}/r_0^{d-1}=-a_{d-1}/da_d$ as one can easily check.

 Let $D_\infty^{d-1}\subseteq D_\infty$ be the closed subgroup of index $d-1$ which fixes $1/t^{d-1}$. Since $k$ contains all $(d-1)$-th roots of unity, $D_\infty^{d-1}$ is normal in $D_\infty$ and the quotient $D_\infty/D_\infty^{d-1}$ is generated by $t\mapsto \zeta t$, where $\zeta\in k$ is a primitive $(d-1)$-th root of unity. Using the previous description of the representation of $D_\infty$ given by $\FF_g$, we get an isomorphism of $D^{d-1}_\infty$-representations
$$
[d-1]^\star\FF_g\cong
$$
$$
\cong\left(\bigoplus_{i=0}^{d-2}(t\mapsto\zeta^i t)^\star\LL_{\psi(h(t))}\otimes\LL_{\rho^d(s_0t)}\right)\otimes\rho(d(d-1)a_d/2)^{deg}\otimes g(\rho,\psi)^{deg}\cong
$$
$$
\cong\left(\bigoplus_{i=0}^{d-2}\LL_{\psi(h(\zeta^i t))}\otimes\LL_{\rho^d(s_0 \zeta^it)}\right)\otimes\rho(d(d-1)a_d/2)^{deg}\otimes g(\rho,\psi)^{deg}
$$
so
$$
[d-1]^\star\det\FF_g\cong\det[d-1]^\star\FF_g\cong
$$
$$
\cong
\left(\bigotimes_{i=0}^{d-2}\LL_{\psi(h(\zeta^i t))}\otimes\LL_{\rho^d(s_0 \zeta^it)}\right)\otimes\rho^{d-1}(d(d-1)a_d/2)^{deg}\otimes (g(\rho,\psi)^{d-1})^{deg}\cong
$$
$$
\cong\LL_{\psi(\sum_{i=0}^{d-2} h(\zeta^i t))}\otimes\LL_{\rho^d(\prod_{i=0}^{d-2}(s_0\zeta^it))}\otimes\rho^{d-1}(d(d-1)a_d/2)^{deg}\otimes (g(\rho,\psi)^{d-1})^{deg}\cong
$$
$$
\cong\LL_{\psi((d-1)b_{d-1}t^{d-1}+(d-1)b_0)}\otimes\LL_{\rho^d((-1)^d(s_0t)^{d-1})}\otimes\rho^{d-1}(d(d-1)a_d/2)^{deg}\otimes (g(\rho,\psi)^{d-1})^{deg}\cong
$$
$$
\cong\LL_{\psi((d-1)b_{d-1}t^{d-1}+(d-1)b_0)}\otimes\LL_{\rho^{d(d-1)}(-s_0t)}\otimes\rho^d(-1)^{deg}\otimes\rho^{d-1}(d(d-1)a_d/2)^{deg}\otimes (g(\rho,\psi)^{d-1})^{deg}\cong
$$
$$
\cong\LL_{\psi((d-1)b_{d-1}t^{d-1}+(d-1)b_0)}\otimes\rho^d(-1)^{deg}\otimes\rho^{d-1}(d(d-1)a_d/2)^{deg}\otimes (g(\rho,\psi)^{d-1})^{deg}
$$
since $\sum_{i=0}^{d-2}(\zeta^j)^i=0$ for $(d-1)\not | j$, $d(d-1)$ is even and $\prod_{i=0}^{d-2}\zeta^i=(-1)^d$.

In particular, $[d-1]^\star(\det\FF_g)$ and $$[d-1]^\star\LL_{\psi((d-1)b_{d-1}t+(d-1)b_0)}\otimes\rho^d(-1)^{deg}\otimes\rho^{d-1}(d(d-1)a_d/2)^{deg}\otimes (g(\rho,\psi)^{d-1})^{deg}$$ are isomorphic characters of $D_\infty^{d-1}$, so there is some character $\chi:k^\star\to\QQ^\star$ with $\chi^{d-1}={\mathbf 1}$ such that
$$
 \det\FF_g\cong\LL_\chi\otimes\LL_{\psi((d-1)b_{d-1}t+(d-1)b_0)}\otimes\rho^d(-1)^{deg}\otimes\rho^{d-1}(d(d-1)a_d/2)^{deg}\otimes (g(\rho,\psi)^{d-1})^{deg}
$$
as representations of $D_\infty$. But then
$$
 \widehat{(\det\FF_g)}\otimes\LL_\chi\otimes\LL_{\psi((d-1)b_{d-1}t+(d-1)b_0)}\otimes\rho^d(-1)^{deg}\otimes\rho^{d-1}(d(d-1)a_d/2)^{deg}\otimes (g(\rho,\psi)^{d-1})^{deg}
$$
is a rank 1 smooth sheaf on $\GG_{m,k}$, tamely ramified at $0$ and unramified at infinity, so it must be geometrically trivial, that is, $\chi$ is trivial (since everything else is unramified at $0$). Moreover, since the Frobenius action is trivial at infinity it must be the trivial sheaf. Therefore
$$
 \det\FF_g\cong\LL_{\psi((d-1)b_{d-1}t+(d-1)b_0)}\otimes\rho^d(-1)^{deg}\otimes\rho^{d-1}(d(d-1)a_d/2)^{deg}\otimes (g(\rho,\psi)^{d-1})^{deg}
$$
as sheaves on $\AAA^1_k$.
\end{proof}

\begin{prop}\label{SL}
 Suppose that $p>d$, the sheaf $\FF_g$ does not have finite monodromy (e.g. $p>2d-1$) and there do not exist $c,d\in k$ such that $g(x+c)+d$ is odd. Then we have an estimate
$$
\left|\sum_{x\in k_r}\psi(\Tr_{k_r/k}(f(x)))\right|\leq C_{d,r}q^{\frac{r+1}{2}}
$$
where
$$
C_{d,r}=\frac{1}{d-1}\sum_{i=0}^{d-1}|i-1|{{d-2+r-i}\choose{r-i}}{{d-1}\choose i}
$$
unless $a_{d-1}=0$ and $r=d-1$, in which case there is an estimate
$$
\left|\sum_{x\in k_r}\psi(\Tr_{k_r/k}(f(x)))-(-1)^{d-1}q\cdot\rho^d(-1)(\psi(b_0)\rho(d(d-1)a_d/2)g(\rho,\psi))^{d-1}\right|< C_{d,r}q^{\frac{r+1}{2}}.
$$
\end{prop}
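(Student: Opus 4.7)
The plan is to start from formula~(\ref{adams}), rewrite it through the $r$-th Adams power of $\FF_g$, and then decompose that Adams power into Schur functors of hook shape by Newton's identity. Using $\Tr(\Frob_{k,u}|[\FF_g]^r_u)=\Tr(\Frob^r_{k,u}|(\FF_g)_u)$, (\ref{adams}) reads
$$
\sum_{x\in k_r}\psi(\Tr_{k_r/k}(f(x)))=-\sum_{u\in k}\Tr(\Frob_{k,u}|[\FF_g]^r_u),
$$
and in the Grothendieck group of $\QQ$-sheaves one has
$$
[\FF_g]^r=\sum_{i=0}^{r-1}(-1)^i\,S^{(r-i,1^i)}\FF_g,
$$
where $S^\lambda$ denotes the Schur functor. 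The idea is to apply Grothendieck--Lefschetz term by term and bound each piece by Deligne's weight estimate, separating the unique piece that becomes geometrically trivial in the exceptional case.

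Let $V$ be the stalk of $\FF_g$ at a geometric generic point, so $\dim V=d-1$. Under the hypotheses (finite monodromy excluded, no $c,d$ with $g(x+c)+d$ odd) the cited results of Katz force the connected component of the geometric monodromy group of $\FF_g$ to be $\mathrm{SL}(V)$. As $\mathrm{SL}(V)$-representations, $S^{(r-i,1^i)}V$ vanishes for $i\ge d-1$ (the hook has length $i+1>d-1$), is irreducible for every $0\le i\le d-2$, and is trivial exactly when the hook equals the full column $(1^{d-1})$, i.e.\ $r-i=1$ and $i=d-2$; this only arises when $r=d-1$. So at most one piece of the decomposition is geometrically trivial, and it occurs only in the exceptional case.

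For each $i$ with $S^{(r-i,1^i)}\FF_g$ geometrically nontrivial irreducible, the lisse sheaf has no geometric invariants or coinvariants, so $\HH^0_c=\HH^2_c=0$ and Grothendieck--Lefschetz gives
$$
\sum_{u\in k}\Tr(\Frob_{k,u}|(S^{(r-i,1^i)}\FF_g)_u)=-\Tr(\Frob_k|\HH^1_c(\AAA^1_{\bar k},S^{(r-i,1^i)}\FF_g)).
$$
Because $\FF_g$ is pure of weight $1$, each $S^{(r-i,1^i)}\FF_g$ is pure of weight $r$, so by Deligne's theorem $\HH^1_c$ is mixed of weights $\le r+1$, which yields the bound $\dim\HH^1_c\cdot q^{(r+1)/2}$. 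By the Euler--Poincar\'e formula on $\AAA^1$ this dimension equals $\Sw_\infty(S^{(r-i,1^i)}\FF_g)-\dim(S^{(r-i,1^i)}\FF_g)^{I_\infty}$, and these local invariants are computable from the $D_\infty$-representation of $\FF_g$ recalled in point~(1) before Lemma~\ref{determinant}: after pulling back by $[d-1]$, $\FF_g$ becomes a direct sum $L_0\oplus\cdots\oplus L_{d-2}$ of explicit rank-one sheaves, so $[d-1]^\star S^{(r-i,1^i)}\FF_g$ decomposes over the semistandard tableaux of the hook into tensor monomials $\bigotimes_j L_j^{c_j}$ whose Swan conductors are read off the degrees of $\sum_j c_j h(\zeta^j t)$. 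Descending back under $[d-1]$ and collecting contributions across $i=0,\ldots,d-2$ is expected to produce precisely the constant $C_{d,r}$; this combinatorial/ramification bookkeeping is the main technical obstacle of the proof.

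It remains to handle the exceptional case. When $r=d-1$ the piece at $i=d-2$ is $S^{(1^{d-1})}\FF_g=\det\FF_g$, which by Lemma~\ref{determinant} is geometrically constant iff $b_{d-1}=0$, equivalently $a_{d-1}=0$. In that case the Frobenius acts on $\det\FF_g$ by the scalar
$$
\alpha=\rho^d(-1)\bigl(\psi(b_0)\rho(d(d-1)a_d/2)g(\rho,\psi)\bigr)^{d-1},
$$
so $\HH^0_c=\HH^1_c=0$, $\HH^2_c=\QQ(-1)\otimes\alpha^{\deg}$, and the contribution to the overall sum is $(-1)^{(d-2)+1}\cdot q\alpha=(-1)^{d-1}q\alpha$, which is the asserted main term. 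The other pieces (which are geometrically nontrivial irreducibles even in this exceptional subcase) are bounded as before, giving the strict inequality in the statement: strict because after extracting the weight-$(r+1)$ contribution of the trivial piece, the remaining $\HH^1_c$'s fail in general to have every Frobenius eigenvalue of maximal modulus $q^{(r+1)/2}$.
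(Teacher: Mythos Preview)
Your overall architecture matches the paper's: start from (\ref{adams}), decompose the Adams power, use the $\mathrm{SL}(V)$ monodromy to kill $\HH^2_c$ except in the exceptional case, and bound the remaining $\HH^1_c$'s by Deligne. Your choice of the hook Schur functors $S^{(r-i,1^i)}$ instead of the paper's $\Sym^{r-i}\otimes\wedge^i$ is a legitimate variant (the two decompositions are related by Pieri, and the weighted rank sums agree, so the constant $C_{d,r}$ comes out the same).

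The genuine gap is in step~5, the bound on $\dim\HH^1_c$. You propose to pull back by $[d-1]$, decompose into rank-one pieces $\bigotimes_j L_j^{c_j}$, compute each Swan conductor from the degree of $\sum_j c_j h(\zeta^j t)$, and then descend and sum. You yourself flag this as ``the main technical obstacle'' and do not carry it out. This route is feasible but much harder than necessary, and as written the proof is incomplete. The paper bypasses all of this with a one-line slope argument: since $\FF_g$ has a single slope $d/(d-1)$ at infinity, every slope of any subquotient of $\FF_g^{\otimes r}$ (in particular of $S^{(r-i,1^i)}\FF_g$ or of $\Sym^{r-i}\FF_g\otimes\wedge^i\FF_g$) is at most $d/(d-1)$. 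Hence
\[
\dim\HH^1_c=-\chi=\Swan_\infty-\mathrm{rank}\le\frac{d}{d-1}\,\mathrm{rank}-\mathrm{rank}=\frac{1}{d-1}\,\mathrm{rank},
\]
which immediately produces $C_{d,r}$. This is the key idea you are missing.

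A second, smaller issue: your justification of the strict inequality in the exceptional case (``the remaining $\HH^1_c$'s fail in general to have every Frobenius eigenvalue of maximal modulus'') is not a proof. In your hook decomposition the correct reason is simply that the geometrically trivial piece $\det\FF_g$ has $\HH^1_c=0$, which is strictly less than its contribution $\tfrac{1}{d-1}\cdot 1$ to $C_{d,r}$. The paper argues analogously: for the two values of $i$ with nonzero $\HH^2_c$, the sheaf $\Sym^{r-i}\FF_g\otimes\wedge^i\FF_g$ has at least one slope~$0$ at infinity, so the slope bound on $\Swan_\infty$ is strict.
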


\begin{proof}
 By \cite[Section 1]{fu2004moment}, we have
$$
\sum_{x\in k_r}\psi(\Tr_{k_r/k}(f(x)))=-\sum_{u\in k}\Tr(\Frob_{k,u}|[\FF_g]^r_u)=
$$
$$
=\sum_{i=0}^r(-1)^{i-1}(i-1)\Tr(\Frob_k,\HH^1_c(\AAA^1_{\bar k},\Sym^{r-i}\FF_g\otimes\wedge^i\FF_g))-
$$
$$
-\sum_{i=0}^r(-1)^{i-1}(i-1)\Tr(\Frob_k,\HH^2_c(\AAA^1_{\bar k},\Sym^{r-i}\FF_g\otimes\wedge^i\FF_g)).
$$

Let $G\subseteq\mathrm{GL}(V)$ be the geometric monodromy group of $\FF_g$. Under the hypotheses of the proposition, the unit connected component of $G$ is $\mathrm{SL}(V)$, so $G$ is the inverse image of its image by the determinant. By lemma \ref{determinant}, $G$ is $\mathrm{SL}(V)$ if $b_{d-1}=0$ (if and only if $a_{d-1}=0$) and $\mathrm{GL}_p(V)=\mu_p\cdot\mathrm{SL}(V)$ (since $p>d$, so $p$ does not divide $d-1$) if $b_{d-1}\neq 0$.

For every $i$, the dimension of $\HH^2_c(\AAA^1_{\bar k},\Sym^{r-i}\FF_g\otimes\wedge^i\FF_g)$ is the dimension of the coinvariant (or the invariant) space of the action of $G$ on $\Sym^{r-i}V\otimes\wedge^i V$. By \cite[Corollary 4.2]{rojas-leon-biotwbfac}, the action of $\mathrm{SL}(V)\subseteq G$ on $\Sym^{r-i}V\otimes\wedge^i V$ has no invariants unless $r=d-1$ and $i=r,r-1$, in which case the invariant space $W_i$ is one-dimensional. If $a_{d-1}\neq 0$, a generator $\zeta_p$ of the quotient $G/\mathrm{SL}(V)\cong\mu_p$ acts on $W_i$ via multiplication by $\zeta_p^{d-1}$, which can not be trivial since $p>d$. So the action of $G$ has no invariants on $\Sym^{r-i}V\otimes\wedge^i V$ for any $i$ if $a_{d-1}\neq 0$.

In that case, since $\HH^1_c(\AAA^1_{\bar k},\Sym^{r-i}\FF_g\otimes\wedge^i\FF_g)$ is mixed of weights $\leq r+1$ we get
$$
\left|\sum_{x\in k_r}\psi(\Tr_{k_r/k}(f(x)))\right|\leq\sum_{i=0}^r|i-1|\dim\HH^1_c(\AAA^1_{\bar k},\Sym^{r-i}\FF_g\otimes\wedge^i\FF_g)\cdot q^{\frac{r+1}{2}}.
$$
Moreover, by the Ogg-Shafarevic formula we have
$$
\dim\HH^1_c(\AAA^1_{\bar k},\Sym^{r-i}\FF_g\otimes\wedge^i\FF_g)=-\chi(\AAA^1_{\bar k},\Sym^{r-i}\FF_g\otimes\wedge^i\FF_g)=
$$
$$
=\mathrm{Swan}_\infty(\Sym^{r-i}\FF_g\otimes\wedge^i\FF_g)-\mathrm{rank}(\Sym^{r-i}\FF_g\otimes\wedge^i\FF_g)\leq
$$
$$
\leq\frac{1}{d-1}\mathrm{rank}(\Sym^{r-i}\FF_g\otimes\wedge^i\FF_g)=\frac{1}{d-1}{{d-2+r-i}\choose{r-i}}{{d-1}\choose i}
$$
since all slopes at infinity of $\FF_g$ (and a fortiori of $\Sym^{r-i}\FF_g\otimes\wedge^i\FF_g$) are $\leq \frac{d}{d-1}$.

Suppose now that $a_{d-1}=0$ and $r=d-1$. As in \cite[Corollary 4.2]{rojas-leon-biotwbfac}, we have 
$$
\sum_{i=0}^r(-1)^{i-1}(i-1)\Tr(\Frob_k,\HH^2_c(\AAA^1_{\bar k},\Sym^{r-i}\FF_g\otimes\wedge^i\FF_g))=
$$
$$
=(-1)^{r}(r-2)\Tr(\Frob_k,\HH^2_c(\AAA^1_{\bar k},\Sym^{1}\FF_g\otimes\wedge^{r-1}\FF_g))+
$$
$$
+(-1)^{r-1}(r-1)\Tr(\Frob_k,\HH^2_c(\AAA^1_{\bar k},\wedge^r\FF_g))=
$$
$$
=(-1)^{r-1}\Tr(\Frob_k,\HH^2_c(\AAA^1_{\bar k},\det\FF_g))=
$$
$$
=(-1)^dq\cdot\psi((d-1)b_0)\rho^d(-1)\rho^{d-1}(d(d-1)a_d/2)g(\rho,\psi)^{d-1}=
$$
$$
=(-1)^{d}q\cdot\rho^d(-1)(\psi(b_0)\rho(d(d-1)a_d/2)g(\rho,\psi))^{d-1}
$$
by lemma \ref{determinant}. We conclude as above using the fact that, for the two values of $i$ for which $\HH^2_c(\AAA^1_{\bar k},\Sym^{r-i}\FF_g\otimes\wedge^i\FF_g)$ is one-dimensional, the sheaf $\Sym^{r-i}\FF_g\otimes\wedge^i\FF_g$ has at least one slope equal to $0$ at infinity, so
$$
\dim\HH^1_c(\AAA^1_{\bar k},\Sym^{r-i}\FF_g\otimes\wedge^i\FF_g)=1-\chi(\AAA^1_{\bar k},\Sym^{r-i}\FF_g\otimes\wedge^i\FF_g)=
$$
$$
=1+\mathrm{Swan}_\infty(\Sym^{r-i}\FF_g\otimes\wedge^i\FF_g)-\mathrm{rank}(\Sym^{r-i}\FF_g\otimes\wedge^i\FF_g)\leq
$$
$$
\leq 1+\frac{d}{d-1}(\mathrm{rank}(\Sym^{r-i}\FF_g\otimes\wedge^i\FF_g)-1)-\mathrm{rank}(\Sym^{r-i}\FF_g\otimes\wedge^i\FF_g)<
$$
$$
<\frac{1}{d-1}\mathrm{rank}(\Sym^{r-i}\FF_g\otimes\wedge^i\FF_g)=\frac{1}{d-1}{{d-2+r-i}\choose{r-i}}{{d-1}\choose i}.
$$
\end{proof}

\begin{prop}
 Suppose that $p>d$, the sheaf $\FF_g$ does not have finite monodromy (e.g. $p>2d-1$) and there exist $\alpha,\beta\in k$ such that $g(x+\alpha)+\beta$ is odd (so $d$ is odd). Then we have an estimate
$$
\left|\sum_{x\in k_r}\psi(\Tr_{k_r/k}(f(x)))\right|\leq C_{d,r}q^{\frac{r+1}{2}}
$$
where
$$
C_{d,r}=\frac{1}{d-1}\sum_{i=0}^{d-1}|i-1|{{d-2+r-i}\choose{r-i}}{{d-1}\choose i}
$$
unless $a_{d-1}=0$ and $r\leq d-1$ is even, in which case there is an estimate
$$
\left|\sum_{x\in k_r}\psi(\Tr_{k_r/k}(f(x)))-(-1)^r\psi(-\beta)^rq^{\frac{r}{2}+1}\right|< C_{d,r}q^{\frac{r+1}{2}}.
$$
\end{prop}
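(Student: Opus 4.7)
Following the blueprint of Proposition~\ref{SL}, I would start from \eqref{adams} and the Newton-type expansion
$[\FF_g]^r = \sum_{i=0}^r (-1)^{i-1}(i-1)\,\Sym^{r-i}\FF_g\otimes\wedge^i\FF_g$
in the Grothendieck group, which via the Grothendieck-Lefschetz trace formula writes the sum as an alternating sum of Frobenius traces on $\HH^1_c$ and $\HH^2_c$ of the tensor sheaves $\Sym^{r-i}\FF_g\otimes\wedge^i\FF_g$. Exactly as in Proposition~\ref{SL}, Ogg-Shafarevic combined with the slope $\leq d/(d-1)$ at infinity bounds the $\HH^1_c$ contribution by $C_{d,r}q^{(r+1)/2}$. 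The task is to handle the $\HH^2_c$ pieces, whose Frobenius trace equals $q$ times the trace on the coinvariants of the geometric monodromy group $G$ acting on $\Sym^{r-i}V\otimes\wedge^i V$.

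By the symplectic hypothesis $G_0=\mathrm{Sp}(V)$, combined with Lemma~\ref{determinant} and the fact that $\mathrm{Sp}(V)\subseteq\mathrm{SL}(V)$, a short similitude argument identifies $G=\mathrm{Sp}(V)$ when $a_{d-1}=0$ and $G=\mu_p\cdot\mathrm{Sp}(V)$ when $a_{d-1}\neq 0$, the $\mu_p$ acting by scalars. In the second case a generator of $\mu_p$ multiplies any $\mathrm{Sp}(V)$-invariant of $\Sym^{r-i}V\otimes\wedge^i V$ by $\zeta_p^r$, which is nontrivial since $1\leq r<p$, so all $G$-invariants vanish and the first bound is proved. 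Assume then $a_{d-1}=0$. By the first fundamental theorem of invariant theory for $\mathrm{Sp}(V)$, the space $(V^{\otimes r})^{\mathrm{Sp}(V)}$ vanishes for $r$ odd, while for $r=2m$ even with $r\leq d-1=\dim V$ it is spanned by pair-contractions using the symplectic form $\omega\in\wedge^2 V$. I expect, via a combinatorial identity parallel to Corollary~4.2 of \cite{rojas-leon-biotwbfac}, that the alternating sum
$$
\sum_{i=0}^r (-1)^{i-1}(i-1)\dim\bigl(\Sym^{r-i}V\otimes\wedge^i V\bigr)^{\mathrm{Sp}(V)}
$$
collapses in the range $r\leq d-1$ to $\pm 1$, leaving a one-dimensional total $\HH^2_c$ contribution carried essentially by $\omega^{\otimes m}\in\wedge^{2m}V$.

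The main obstacle will be computing the arithmetic Frobenius eigenvalue on this invariant line as $\psi(-\beta)^r q^{r/2}$, which after the Tate twist by $\QQ(-1)$ produces the claimed $q^{r/2+1}$. Geometrically $\omega$ is unique only up to scalar; arithmetically, its normalization can be read off from the local description of $\FF_g$ at infinity recalled in item~(1) preceding Lemma~\ref{determinant}. The symplectic self-duality of $\FF_g$ reflects a $t\mapsto -t$ symmetry in Katz's local model, and the hypothesis that $g(x+\alpha)+\beta$ is odd converts this, after the substitution $y=v(t)+\alpha$ and compensation of the Artin-Schreier exponent by $\beta$, into an explicit self-pairing of the local $D_\infty$-representation whose Frobenius scalar is $\psi(-\beta)$. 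Propagating this factor through the $[d-1]_\star$-induction, in the same spirit as the determinant was handled in Lemma~\ref{determinant}, should yield $\psi(-\beta)^r$ for the Frobenius eigenvalue on $\omega^{\otimes m}$ lifted to $\FF_g^{\otimes r}$; combined with the sign coming from the virtual Adams-power formula, this produces the exceptional term $(-1)^r\psi(-\beta)^r q^{r/2+1}$.
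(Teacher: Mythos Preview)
Your structural setup is correct and matches the paper: the Adams-power decomposition, the $\HH^1_c$ bound via Ogg--Shafarevic and the slope $\leq d/(d-1)$, the identification $G_0=\mathrm{Sp}(V)$ with $G=\mathrm{Sp}(V)$ or $\mu_p\cdot\mathrm{Sp}(V)$ according as $a_{d-1}=0$ or not, and the scalar action of $\mu_p$ killing invariants in the latter case. (One small gap: your ``$1\leq r<p$'' is not given; it holds because $\mathrm{Sp}(V)$-invariants in $\Sym^{r-i}V\otimes\wedge^i V$ exist only for $r\leq d-1<p$, and for other $r$ there is nothing to kill.) For the invariant-space dimensions the paper invokes \cite[lemma on p.~62]{katz2001frobenius} directly rather than a combinatorial identity analogous to the $\mathrm{SL}$ case, but the content is the same: invariants only for $r\leq d-1$ even and $i=r,r-1$, each one-dimensional.

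Where you diverge is in computing the Frobenius eigenvalue on the invariant line, and your proposed route through the local model at infinity is both harder and vaguer than necessary. The paper's argument is global and short. First, $a_{d-1}=0$ forces $\alpha=0$, since the $x^{d-1}$-coefficient of $g(x+\alpha)+\beta$ is $da_d\alpha$; hence $g(-x)=-2\beta-g(x)$. Then the Fourier transform duality $D\circ FT_\psi=[-1]^\star FT_\psi\circ D(1)$ gives that the dual of $\FF_g$ is $[-1]^\star\FF_{-g}(1)=\psi(2\beta)^{deg}\otimes\FF_g(1)$, so $\psi(\beta)^{deg}\otimes\FF_g(1/2)$ is \emph{arithmetically} symplectically self-dual. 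Consequently the one-dimensional $\mathrm{Sp}(V)$-invariant subspace of $(\Sym^{r-i}\FF_g\otimes\wedge^i\FF_g)\otimes\psi(\beta)^{r\cdot deg}(r/2)$ is fixed by every Frobenius, i.e.\ $W_i\cong\psi(-\beta)^{r\cdot deg}(-r/2)$, and the $\HH^2_c$ contribution collapses to $(-1)^{r-1}\psi(-\beta)^r q^{r/2+1}$. This bypasses any symmetry analysis of $h(t)$ or propagation through $[d-1]_\star$; your local approach might in principle be pushed through, but the global duality identity delivers the eigenvalue in one line and is what you should use.
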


\begin{proof}
The proof is similar to the previous one. In this case, the unit connected component of $G$ is $\mathrm{Sp}(V)$, so by lemma \ref{determinant} $G$ is $\mathrm{Sp}(V)$ if $b_{d-1}=0$ (if and only if $a_{d-1}=0$) and $\mu_p\cdot\mathrm{SL}(V)$ (since $p>d$, so $p$ does not divide $d-1$) if $b_{d-1}\neq 0$.

By \cite[lemma on p.62]{katz2001frobenius}, the action of $\mathrm{Sp}(V)\subseteq G$ on $\Sym^{r-i}V\otimes\wedge^i V$ has no invariants unless $r\leq d-1$ is even and $i=r,r-1$, in which case the invariant space $W_i$ is one-dimensional. If $a_{d-1}\neq 0$, a generator $\zeta_p$ of the quotient $G/\mathrm{Sp}(V)\cong\mu_p$ acts on $W_i$ via multiplication by $\zeta_p^{d-1}$, which can not be trivial since $p>d$. So the action of $G$ has no invariants on $\Sym^{r-i}V\otimes\wedge^i V$ for any $i$ if $a_{d-1}\neq 0$. We conclude this case as in the previous proposition.

Suppose now that $a_{d-1}=0$, $r\leq d-1$ is even and $i=r$ or $r-1$. Since the coefficient of $x^{d-1}$ in $g(x)$ is $0$, the coefficient in $g(x+\alpha)+\beta$ is $da_d\alpha$, so it can only be an odd polynomial if $\alpha=0$. That is, $g(x)+\beta$ is odd, or equivalently, $g(-x)=-2\beta-g(x)$. Then the sheaf $\psi(\beta)^{deg}\otimes\FF_g(1/2)$ is self-dual: since the dual of $\LL_{\psi(g)}$ is $\LL_{\psi(-g)}(1)$, using that $D\circ FT_\psi=[-1]^\star FT_\psi\circ D(1)$ \cite[Corollaire 2.1.5]{katz1985transformation} we get that the dual of $\FF_g=\HHH^{-1}(FT_\psi(\LL_{\psi(g)}[1]))$ is
$$
[-1]^\star \HHH^{-1}(FT_\psi\LL_{\psi(-g)}(1))=[-1]^\star\FF_{-g}(1)=[-1]^\star\FF_{2\beta+g(-x)}(1)=\psi(2\beta)^{deg}\otimes\FF_g(1)
$$
so $\psi(\beta)^{deg}\otimes\FF_g(1/2)$ is self-dual (symplectically, since it is so geometrically by \cite[Theorem 19]{katz-monodromy}). In particular, the one-dimensional $\mathrm{Sp}(V)$-invariant subspace of $(\Sym^{r-i}\FF_g\otimes\wedge^i\FF_g)\otimes\psi(\beta)^{r\cdot deg}(r/2)$ is also invariant under all Frobenii. So $W_i$ is in fact the geometrically constant sheaf $\psi(-\beta)^{r\cdot deg}(-r/2)$. In particular
$$
\sum_{i=0}^r(-1)^{i-1}(i-1)\Tr(\Frob_k,\HH^2_c(\AAA^1_{\bar k},\Sym^{r-i}\FF_g\otimes\wedge^i\FF_g))=
$$
$$
=(-1)^{r}(r-2)\Tr(\Frob_k,\HH^2_c(\AAA^1_{\bar k},\Sym^{1}\FF_g\otimes\wedge^{r-1}\FF_g))+
$$
$$
+(-1)^{r-1}(r-1)\Tr(\Frob_k,\HH^2_c(\AAA^1_{\bar k},\wedge^r\FF_g))=
$$
$$
=(-1)^{r-1}\Tr(\Frob_k,\HH^2_c(\AAA^1_{\bar k},\psi(-\beta)^{r\cdot deg}(-r/2)))=(-1)^{r-1}\psi(-\beta)^rq^{\frac{r}{2}+1}.
$$
We conclude as in the previous proposition.
\end{proof}

\section{Multiplicative character sums for translation invariant polynomials}

Let $f\in k[x]$ be translation invariant, and $g\in k[x]$ of degree $d$ such that $f(x)=g(x^q-x)$. Let $\chi:k^\star\to\QQ^\star$ a non-trivial multiplicative character of order $m$, extended by zero to all of $k$. Since $f$ has degree $qd$, Weil's bound gives in this case
$$
\left|\sum_{x\in k_r}\chi(\Nm_{k_r/k}(f(x)))\right|\leq (qd-1)q^{\frac{r}{2}}.
$$
On the other hand we have, for every $r\geq 1$,
$$
\sum_{x\in k_r}\chi(\Nm_{k_r/k}(f(x)))=\sum_{x\in k_r}\chi(\Nm_{k_r/k}(g(x^q-x)))=
$$
$$
=\sum_{t\in k_r}\#\{x\in k_r|x^q-x=t\}\chi(\Nm_{k_r/k}(g(t)))=
$$
$$
=\sum_{t\in k_r}\sum_{u\in k}\psi(u\Tr_{k_r/k}(t))\chi(\Nm_{k_r/k}(g(t)))
=\sum_{u\in k}\sum_{t\in k_r}\psi(u\Tr_{k_r/k}(t))\chi(\Nm_{k_r/k}(g(t))).
$$

Consider the $\QQ$-sheaf $\LL_{\chi(g)}:=g^\star\LL_\chi$ on $\AAA^1_k$, where $\LL_\chi$ is the Kummer sheaf on $\GG_{m,k}$ associated to $\chi$ \cite[1.7]{deligne569application}, extended by zero to $\AAA^1_k$. Suppose that $g$ is square-free and its degree $d$ is prime to $p$. Then $\LL_{\chi(g)}$ is an irreducible middle extension sheaf, smooth on the complement of the subscheme $Z\subseteq\AAA^1_k$ defined by $g=0$. Since there is at least one point where it is not smooth, it is not isomorphic to an Artin-Schreier sheaf and therefore the Fourier transform of $\LL_{\chi(g)}[1]$ is a single irreducible middle extension sheaf $\FF_g$ placed in degree $-1$ \cite[8.2]{katz1988gsk}. We have
$$
\sum_{x\in k_r}\chi(\Nm_{k_r/k}(f(x)))=\sum_{x\in k_r}\chi(\Nm_{k_r/k}(g(x^q-x)))
=\sum_{u\in k}\sum_{t\in k_r}\psi(u\Tr_{k_r/k}(t))\chi(\Nm_{k_r/k}(g(t)))=
$$
\begin{equation}\label{adams2}
=-\sum_{u\in k}\Tr(\Frob^r_{k,u}|(\FF_g)_u)=-\sum_{u\in k}\Tr(\Frob_{k,u}|[\FF_g]^r_u)
\end{equation}
where $[\FF_g]^r$ is the $r$-th Adams power of $\FF_g$.

\begin{prop}\label{monoinf}
 The sheaf $\FF_g$ has generic rank $d$, it is smooth on $\GG_{m,k}$ and tamely ramified at $0$. Its rank at $0$ is $d-1$. If all roots of $g$ are in $k$, the action of the decomposition group $D_\infty$ on the generic stalk of $\FF_g$ splits as a direct sum $\bigoplus_a \chi(g'(a))^{deg}\otimes g(\chi,\psi)^{deg}\otimes\LL_{\bar\chi}\otimes\LL_{\psi_a}$ where the sum is taken over the roots of $f$, $\LL_{\psi_a}$ is the Artin-Schreier sheaf corresponding to the character $t\mapsto \psi(at)$ and $g(\chi,\psi)=-\sum_t\chi(t)\psi(t)$ if the Gauss sum.  
\end{prop}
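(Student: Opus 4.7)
The plan is to extract the numerical invariants (generic rank and stalk dimension at $0$) by direct Euler-characteristic computations, derive tameness at $0$ and the decomposition of $\FF_g|_{D_\infty}$ from Laumon's local Fourier transform together with its stationary phase principle \cite{laumon1987transformation}, and deduce smoothness on $\GG_{m,k}$ from semicontinuity.

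By proper base change, for each geometric point $u$ of $\AAA^1$ one has $(\FF_g)_u=\HH^1_c(\AAA^1_{\bar k},\LL_{\chi(g)}\otimes\LL_{\psi_u})$ with $\HH^0_c$ and $\HH^2_c$ vanishing (since $\LL_{\chi(g)}$ is geometrically a non-trivial character sheaf). Let $U=\AAA^1\setminus Z$, where $Z$ is the $d$-point zero locus of $g$; on $U$ the twisted sheaf is smooth of rank $1$, tame at each root of $g$, and at $\infty$ has Swan conductor $0$ when $u=0$ and $1$ when $u\neq 0$. Ogg-Shafarevich then gives $-\chi_c(U,\LL_{\chi(g)}\otimes\LL_{\psi_u})$ equal to $d-1$ in the first case and $d$ in the second, proving the generic rank is $d$ and the stalk at $0$ has dimension $d-1$. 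Smoothness on $\GG_{m,k}$ follows because $u\mapsto\dim(\FF_g)_u$ is upper semicontinuous and takes the constant value $d$ on $\GG_{m,k}(\bar k)$. Tameness at $0$ is a direct application of Laumon's local Fourier transform $FT_\psi^{(\infty,0')}$: this functor preserves tameness and captures the contribution of $\LL_{\chi(g)}$ at $\infty$ to the $I_{0'}$-representation of $\FF_g$; since $\LL_{\chi(g)}$ is tame at $\infty$ (with $I_\infty$-action by $\chi^d$), that $I_{0'}$-representation is tame.

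For the decomposition at infinity, assume $g(x)=\prod_{i=1}^d(x-a_i)$ with $a_i\in k$. Writing $g(x)=(x-a_i)h_i(x)$ with $h_i(a_i)=g'(a_i)$, the sheaf $\LL_{\chi(h_i)}$ is smooth at $a_i$ with Frobenius value $\chi(g'(a_i))$, so the local representation of $\LL_{\chi(g)}$ at $a_i$ is isomorphic to $\chi(g'(a_i))^{deg}\otimes\LL_{\chi(x-a_i)}$. Applying Laumon's stationary phase to the tame-at-$\infty$ source $\LL_{\chi(g)}$ gives
$$
\FF_g|_{D_\infty}\cong\bigoplus_{i=1}^d \chi(g'(a_i))^{deg}\otimes FT_\psi^{(a_i,\infty)}\bigl(\LL_{\chi(x-a_i)}\bigr),
$$
with no $(\infty,\infty)$ contribution. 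The local Fourier transform intertwines translation by $a_i$ on the source with tensoring by $\LL_{\psi_{a_i}}$ on the target, so $FT_\psi^{(a_i,\infty)}(\LL_{\chi(x-a_i)})\cong\LL_{\psi_{a_i}}\otimes FT_\psi^{(0,\infty)}(\LL_\chi)$. Finally $FT_\psi^{(0,\infty)}(\LL_\chi)\cong g(\chi,\psi)^{deg}\otimes\LL_{\bar\chi}$ by the classical Gauss-sum computation: for $u\neq 0$ the trace identity $-\sum_{x\in k^\star}\chi(x)\psi(ux)=\bar\chi(u)g(\chi,\psi)$ matches the Frobenius trace of the claimed $D_\infty$-representation. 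Assembling these pieces yields the stated formula.

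The main obstacle is the last step: the Laumon local-Fourier-transform bookkeeping, specifically ensuring that the translation-by-$a_i$ compatibility, Tate twists and Frobenius normalisations line up so that the constants $\chi(g'(a_i))$ and the Gauss-sum factor $g(\chi,\psi)$ appear with exactly the normalisation stated in the proposition.
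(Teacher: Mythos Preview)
Your argument follows essentially the same route as the paper's: Euler--characteristic/Ogg--Shafarevich for the ranks, Laumon's theory for tameness at $0$, and the local Fourier transform at each root of $g$ (with the factorisation $g(x)=g'(a)(x-a)\cdot\frac{g(x)}{g'(a)(x-a)}$ giving the local identification $\LL_{\chi(g)}\cong\chi(g'(a))^{deg}\otimes\LL_{\chi(x-a)}$) for the $D_\infty$-decomposition. The paper phrases the last step as producing $d$ rank-one summands and then counting to see they exhaust the rank, rather than invoking stationary phase as a package, but the content is the same.

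There is one genuine gap. Your smoothness argument on $\GG_{m,k}$ is ``$u\mapsto\dim(\FF_g)_u$ is upper semicontinuous and constantly equal to $d$ on $\GG_{m,k}(\bar k)$''. Constancy of fibre dimension for a constructible sheaf does \emph{not} imply lissity: one also needs that $\FF_g$ has no punctual sections (equivalently, that $\FF_g[1]$ is perverse, or that $\FF_g$ is a middle extension), so that at any point $u$ one has $\dim(\FF_g)_u\leq\dim(\FF_{g,\bar\eta})^{I_u}\leq d$, with equality forcing $I_u$ to act trivially. The paper handles this by invoking directly that $\FF_g$ is an irreducible middle extension (established just before the proposition, since $\LL_{\chi(g)}[1]$ is irreducible perverse and not an Artin--Schreier sheaf), for which the smooth locus is exactly the locus of maximal rank. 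You have all the ingredients for this fix (you noted $\HH^0_c=0$, which gives the absence of punctual sections), but the inference as written is incomplete.
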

 
\begin{proof}
 The generic rank of $\FF_g$ is the dimension of $\HH^1_c(\AAA^1_{\bar k},\LL_{\chi(g)}\otimes\LL_{\psi_z})$ for generic $z$. Since $\LL_{\chi(g)}$ is tamely ramified everywhere and has rank one, for any $z\neq 0$ $\LL_{\chi(g)}\otimes\LL_{\psi_z}$ is tamely ramified at every point of $\AAA^1_{\bar k}$ and totally wild at infinity with Swan conductor $1$. In particular its $\HH^i_c$ vanish for $i\neq 1$. By the Ogg-Shafarevic formula, its Euler characteristic is then $1-d-1=-d$, since there are $d$ points in $\AAA^1_{\bar k}$ where the stalk is zero. Therefore $\dim\HH^1_c(\AAA^1_{\bar k},\LL_{\chi(g)}\otimes\LL_{\psi_z})=d$ for every $z\neq 0$. Similarly, it is $d-1$ for $z=0$. Since $\FF_g$ is a middle extension, it is smooth exactly on the open set where the rank is maximal, so it is smooth on $\GG_{m,k}$. It is tamely ramified at zero, since $\LL_{\chi(g)}$ is tamely ramified at infinity \cite[Th\'eor\`eme 2.4.3]{laumon1987transformation}.

Suppose now that all roots of $g$ are in $k$, and let $a$ be one such root. In an \'etale neighborhood of $a$, the sheaf $\LL_{\chi(g)}$ is isomorphic to $\LL_{\chi(g'(a)(x-a))}=\chi(g'(a))^{deg}\otimes\LL_{\chi(x-a)}$, since $g(x)=g'(a)(x-a)\frac{g(x)}{g'(a)(x-a)}$ and $\frac{g(x)}{g'(a)(x-a)}$ is an $m$-th power in the henselization of $\AAA^1_k$ at $a$ (since its image in the residue field is $1$). Applying Laumon's local Fourier transform \cite[Proposition 2.5.3.1]{laumon1987transformation} and using that Fourier transform commutes with tensoring by unramified sheaves, we deduce that the $D_\infty$-representation $\FF_g$ contains $(LFT_\psi^{(0,\infty)}\chi(g'(a))^{deg}\otimes\LL_{\chi})\otimes\LL_{\psi_a}=\chi(g'(a))^{deg}\otimes g(\chi,\psi)^{deg}\otimes\LL_{\bar\chi}\otimes\LL_{\psi_a}$ as a direct summand. Since $g$ has $d$ distinct roots we obtain $d$ different terms this way, which is the rank of $\FF_g$, so its monodromy at $\infty$ is the direct sum of these terms. 
\end{proof}

Define by induction the sequence of polynomials $g_n[x]\in k[x]$ for $n\geq 1$ by: $g_1(x)=g(x)$, and for $n\geq 1$ $g_{n+1}(x)$ is the resultant in $t$ of $g_n(t)$ and $g(x-t)$. 

\begin{cor}\label{estimatemult}
 Suppose that either $m$ does not divide $r$ or $g_r(0)\neq 0$. Then we have an estimate
$$
\left|\sum_{x\in k_r}\chi(\Nm_{k_r/k}(f(x)))\right|\leq C_{d,r}q^{\frac{r+1}{2}}
$$
where
$$
C_{d,r}=\sum_{i=0}^{r}|i-1|\left({{d-1+r-i}\choose{r-i}}{{d}\choose i}-{{d-2+r-i}\choose{r-i}}{{d-1}\choose i}\right).
$$
\end{cor}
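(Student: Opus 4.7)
The plan is to follow the scheme of Proposition \ref{SL}. Starting from equation (\ref{adams2}) and expanding the $r$-th Adams power as a virtual combination of symmetric and exterior powers via \cite[Section~1]{fu2004moment}, the Grothendieck--Lefschetz trace formula gives
\begin{align*}
\sum_{x\in k_r}\chi(\Nm_{k_r/k}(f(x)))
&=\sum_{i=0}^r(-1)^{i-1}(i-1)\Tr(\Frob_k,\HH^1_c(\AAA^1_{\bar k},\Sym^{r-i}\FF_g\otimes\wedge^i\FF_g))\\
&\quad{}-\sum_{i=0}^r(-1)^{i-1}(i-1)\Tr(\Frob_k,\HH^2_c(\AAA^1_{\bar k},\Sym^{r-i}\FF_g\otimes\wedge^i\FF_g)).
\end{align*}
The proof then reduces to two tasks: showing that every $\HH^2_c$ term vanishes under the hypothesis, and bounding every $\HH^1_c$ term using the weight filtration together with an Euler--Poincar\'e computation.

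For the vanishing of $\HH^2_c$ I would invoke Proposition \ref{monoinf}. After extending scalars so that $g$ splits, the geometric $I_\infty$-representation of $\FF_g$ decomposes as $\bigoplus_{a:g(a)=0}\LL_{\bar\chi}\otimes\LL_{\psi_a}$, so that of $\FF_g^{\otimes r}$ is $\bigoplus_{(a_1,\ldots,a_r)}\LL_{\bar\chi^r}\otimes\LL_{\psi_{a_1+\cdots+a_r}}$. A summand here is the trivial representation precisely when $\bar\chi^r=\mathbf 1$ (equivalently $m\mid r$) and $a_1+\cdots+a_r=0$; by the inductive construction of the $g_n$, the latter holds for some $r$-tuple of roots of $g$ iff $g_r(0)=0$. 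The hypothesis excludes both conditions simultaneously, so $\FF_g^{\otimes r}$ has no trivial $I_\infty$-subquotient, and neither does its direct summand $\Sym^{r-i}\FF_g\otimes\wedge^i\FF_g$. This forces the geometric $\pi_1(\GG_{m,\bar k})$-coinvariants to vanish, whence $\HH^2_c(\GG_{m,\bar k},\Sym^{r-i}\FF_g\otimes\wedge^i\FF_g)=0$, and the excision sequence for $\AAA^1_{\bar k}=\GG_{m,\bar k}\sqcup\{0\}$ transfers this to $\AAA^1_{\bar k}$.

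For the $\HH^1_c$ bound, $\FF_g$ is pure of weight $1$ on $\GG_m$ (as the Fourier transform of the pure perverse sheaf $\LL_{\chi(g)}[1]$), so $\Sym^{r-i}\FF_g\otimes\wedge^i\FF_g$ is mixed of weight $\leq r$ and its $\HH^1_c$ is mixed of weight $\leq r+1$, giving
$$
\left|\Tr(\Frob_k,\HH^1_c(\AAA^1_{\bar k},\Sym^{r-i}\FF_g\otimes\wedge^i\FF_g))\right|\leq\dim\HH^1_c\cdot q^{\frac{r+1}{2}}.
$$
Setting $\FF=\Sym^{r-i}\FF_g\otimes\wedge^i\FF_g$, the sheaf is lisse of generic rank $N:=\binom{d-1+r-i}{r-i}\binom{d}{i}$ on $\GG_m$ and tame at $0$. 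By Proposition \ref{monoinf} all slopes of $\FF_g$ at infinity are $\leq 1$, hence the same holds for $\FF$ and $\Swan_\infty(\FF)\leq N$. Since taking stalks commutes with $\otimes$, $\Sym$, $\wedge$ over characteristic-zero coefficients and $(\FF_g)_0$ has dimension $d-1$, the stalk $\FF_0$ has dimension $M:=\binom{d-2+r-i}{r-i}\binom{d-1}{i}$; moreover $\FF_0$ injects into the $I_0$-invariants of the generic stalk, so $\FF$ has no punctual summand at $0$ and $\HH^0_c(\AAA^1_{\bar k},\FF)=0$. Combining these facts with the vanishing of $\HH^2_c$, Euler--Poincar\'e yields
$$
\dim\HH^1_c(\AAA^1_{\bar k},\FF)=\Swan_\infty(\FF)-\dim\FF_0\leq N-M,
$$
and summing $|i-1|$ times this bound over $0\leq i\leq r$ produces exactly $C_{d,r}$ as the coefficient of $q^{(r+1)/2}$.

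I expect the main obstacle to lie in the $\HH^2_c$ step: one must identify the geometric $I_\infty$-decomposition of $\FF_g^{\otimes r}$ in a way that makes the two conditions $m\mid r$ and $g_r(0)=0$ emerge naturally as the exact obstruction to vanishing, and then check that no trivial $I_\infty$-subquotient is enough to kill the global $\pi_1$-coinvariants. The remaining Euler--Poincar\'e bookkeeping is routine once one has the slope bound $\leq 1$ and the stalk-commutation.
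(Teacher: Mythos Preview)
Your argument is correct and matches the paper's proof essentially line for line: the same $I_\infty$-decomposition from Proposition~\ref{monoinf}, the same identification of the obstruction with $m\mid r$ and $g_r(0)=0$, passage to $\Sym^{r-i}\FF_g\otimes\wedge^i\FF_g$ as a summand of $\FF_g^{\otimes r}$, and the same Ogg--Shafarevic bound using slopes $\leq 1$ and the stalk dimension at~$0$. Your worry in the final paragraph is unfounded: since $\FF_g^{\otimes r}$ is a direct sum of $I_\infty$-characters none of which is trivial, it has no $I_\infty$-coinvariants, hence no $\pi_1(\GG_{m,\bar k})$-coinvariants (the latter being a quotient of the former); the paper phrases this as ``no invariants under $I_\infty$, and a fortiori under $\pi_1$'', which is equivalent here. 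Your treatment is in fact slightly more careful than the paper's in that you justify $\HH^0_c=0$ explicitly (via the injection $\FF_0\hookrightarrow\FF_\eta^{I_0}$), whereas the paper simply writes $\dim\HH^1_c=-\chi$.
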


\begin{proof}
By the previous proposition, the action of the inertia group $I_\infty$ on $\FF_g^{\otimes r}$ splits as a direct sum over the $r$-uples of roots of $f$ 
$$
\bigoplus_{(a_1,\ldots,a_r)}\LL^{\otimes r}_{\bar\chi}\otimes\LL_{\psi_{a_1}}\otimes\cdots\otimes\LL_{\psi_{a_r}}=
\bigoplus_{(a_1,\ldots,a_r)}\LL^{\otimes r}_{\bar\chi}\otimes\LL_{\psi_{a_1+\cdots +a_r}}.
$$

For each $(a_1,\ldots,a_r)$, the character $\LL^{\otimes r}_{\bar\chi}\otimes\LL_{\psi_{a_1+\cdots +a_r}}$ is trivial if and only if both $\LL^{\otimes r}_{\bar\chi}$ and $\LL_{\psi_{a_1+\cdots +a_r}}$ are trivial, that is, if and only if $m$ divides $r$ and $a_1+\cdots+a_r=0$. Under the hypotheses of the corollary, at least one of these conditions does not hold (since the sums $a_1+\cdots+a_r$ are the roots of $g_r$). So $\FF_g^{\otimes r}$ has no invariants under the action of $I_\infty$ and, a fortiori, under the action of the larger group $\pi_1(\GG_{m,\bar k},\bar\eta)$. Since $\Sym^{r-i}\FF_g\otimes\wedge^i\FF_g$ is a subsheaf of $\FF_g^{\otimes r}$ for every $i$, we conclude that $\HH^2_c(\AAA^1_{\bar k},\Sym^{r-i}\FF_g\otimes\wedge^i\FF_g)=0$ for every $i=0,\ldots,r$. Therefore
$$
\sum_{x\in k_r}\chi(\Nm_{k_r/k}(f(x)))=-\sum_{u\in k}\Tr(\Frob_{k,u}|[\FF_g]^r_u)=
$$
$$
=\sum_{i=0}^r(-1)^{i-1}(i-1)\Tr(\Frob_k,\HH^1_c(\AAA^1_{\bar k},\Sym^{r-i}\FF_g\otimes\wedge^i\FF_g)).
$$

Since $\HH^1_c(\AAA^1_{\bar k},\Sym^{r-i}\FF_g\otimes\wedge^i\FF_g)$ is mixed of weights $\leq r+1$, we get
$$
\left|\sum_{x\in k_r}\chi(\Nm_{k_r/k}(f(x)))\right|\leq\sum_{i=0}^r|i-1|\dim\HH^1_c(\AAA^1_{\bar k},\Sym^{r-i}\FF_g\otimes\wedge^i\FF_g)\cdot q^{\frac{r+1}{2}}.
$$
And by the Ogg-Shafarevic formula, we have
$$
\dim\HH^1_c(\AAA^1_{\bar k},\Sym^{r-i}\FF_g\otimes\wedge^i\FF_g)=-\chi(\AAA^1_{\bar k},\Sym^{r-i}\FF_g\otimes\wedge^i\FF_g)=
$$
$$
=\mathrm{Swan}_\infty(\Sym^{r-i}\FF_g\otimes\wedge^i\FF_g)-\mathrm{rank}_0(\Sym^{r-i}\FF_g\otimes\wedge^i\FF_g)\leq
$$
$$
\leq{{d-1+r-i}\choose{r-i}}{{d}\choose i}-{{d-2+r-i}\choose{r-i}}{{d-1}\choose i}
$$
by the previous proposition, since $\FF_g$ is smooth on $\GG_{m,k}$, tamely ramified at $0$ and all its slopes at infinity (and thus all slopes of of $\Sym^{r-i}\FF_g\otimes\wedge^i\FF_g$) are $\leq 1$.

\end{proof}

\begin{cor}\label{det2}
 If all roots of $g(x)=\sum_{i=0}^da_ix^i$ are in $k$, the determinant of $\FF_g$ is $\chi((-1)^{d(d-1)/2}a_d^{-(d-2)}\mathrm{disc}(g))^{deg}\otimes (g(\chi,\psi)^d)^{deg}\otimes\LL_{\bar\chi^d}\otimes\LL_{\psi_{-a_{d-1}/a_d}}$.
\end{cor}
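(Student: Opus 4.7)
The plan is to read off $\det\FF_g$ as a representation of $D_\infty$ from Proposition \ref{monoinf}, then promote that local identification to a global isomorphism on $\GG_{m,k}$ by exactly the same argument used at the end of Lemma \ref{determinant}.

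First, since all roots of $g$ lie in $k$, Proposition \ref{monoinf} gives the splitting
$$
\FF_g\cong\bigoplus_{g(a)=0}\chi(g'(a))^{deg}\otimes g(\chi,\psi)^{deg}\otimes\LL_{\bar\chi}\otimes\LL_{\psi_a}
$$
as $D_\infty$-representations. Taking determinants and using that $\LL_{\psi_a}\otimes\LL_{\psi_b}\cong\LL_{\psi_{a+b}}$, we obtain
$$
\det\FF_g\cong\chi\!\Bigl(\prod_{g(a)=0}g'(a)\Bigr)^{deg}\otimes (g(\chi,\psi)^d)^{deg}\otimes\LL_{\bar\chi^d}\otimes\LL_{\psi_{\sum_a a}}
$$
as $D_\infty$-representations. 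By Vieta, $\sum_a a=-a_{d-1}/a_d$. For the product $\prod_a g'(a)$, write $g(x)=a_d\prod_i(x-a_i)$, so that $g'(a_i)=a_d\prod_{j\neq i}(a_i-a_j)$ and therefore
$$
\prod_i g'(a_i)=a_d^d\prod_{i\neq j}(a_i-a_j)=(-1)^{d(d-1)/2}a_d^d\prod_{i<j}(a_i-a_j)^2=(-1)^{d(d-1)/2}a_d^{-(d-2)}\mathrm{disc}(g),
$$
using the standard formula $\mathrm{disc}(g)=a_d^{2d-2}\prod_{i<j}(a_i-a_j)^2$. Substituting these two identities produces precisely the sheaf claimed in the corollary, as a $D_\infty$-representation.

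It remains to upgrade this isomorphism of $D_\infty$-representations to an isomorphism of sheaves on $\GG_{m,k}$. Let $\mathcal{M}$ be the tensor product of $\det\FF_g$ with the inverse of the candidate sheaf on the right-hand side. By Proposition \ref{monoinf}, $\FF_g$ is smooth on $\GG_{m,k}$ and tame at $0$, hence so is $\det\FF_g$; the right-hand side is manifestly smooth on $\GG_{m,k}$ and tame at $0$ as well (the Artin--Schreier factor is unramified there and the Kummer factor is tame). Thus $\mathcal{M}$ is a rank-one smooth sheaf on $\GG_{m,k}$, tame at $0$, and by the preceding paragraph trivial as a representation of $D_\infty$ (both as inertia and as Weil group, since the splitting in Proposition \ref{monoinf} was $D_\infty$-equivariant). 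Since a tame rank-one sheaf on $\GG_{m,\bar k}$ that is unramified at $\infty$ must be geometrically trivial (its monodromy at $0$ and at $\infty$ are inverses of each other), and since the Frobenius at $\infty$ is already trivial by the $D_\infty$-identification, we conclude that $\mathcal{M}$ is the constant sheaf $\QQ$ on $\GG_{m,k}$, proving the claimed isomorphism.

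The only delicate point is verifying that the decomposition provided by Proposition \ref{monoinf} really is an identification of $D_\infty$-representations (not merely of $I_\infty$-representations), so that the Frobenius at infinity matches on both sides; this is what allows the last paragraph's geometric-triviality argument to also give arithmetic triviality, rather than forcing one to separately compute a Frobenius eigenvalue at some auxiliary point. The discriminant/resultant identity $\prod_i g'(a_i)=(-1)^{d(d-1)/2}a_d^{-(d-2)}\mathrm{disc}(g)$ is classical and should be cited rather than derived in detail.
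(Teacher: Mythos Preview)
Your proposal is correct and follows essentially the same route as the paper: identify $\det\FF_g$ as a $D_\infty$-representation via Proposition~\ref{monoinf}, simplify using $\sum_a a=-a_{d-1}/a_d$ and $\prod_a g'(a)=(-1)^{d(d-1)/2}a_d^{-(d-2)}\mathrm{disc}(g)$, then observe that the ratio with the candidate sheaf is smooth on $\GG_{m,k}$, tame at $0$, unramified at $\infty$, hence geometrically constant, and pin down the constant via Frobenius. The only cosmetic difference is that the paper phrases the last step as ``looking at the Frobenius action at $0$'', whereas you (as in the proof of Lemma~\ref{determinant}) use the $D_\infty$-identification itself, i.e.\ Frobenius at $\infty$; your formulation is the cleaner one and your explicit caveat that Proposition~\ref{monoinf} gives a $D_\infty$- (not merely $I_\infty$-) splitting is exactly the point that makes it work.
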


\begin{proof}
 By proposition \ref{monoinf}, the action of $D_\infty$ on the determinant of $\FF_g$ is given by 
$$
\GGG:=\bigotimes_a \chi(g'(a))^{deg}\otimes g(\chi,\psi)^{deg}\otimes\LL_{\bar\chi}\otimes\LL_{\psi_a}=\chi(\prod_a g'(a))^{deg}\otimes(g(\chi,\psi)^d)^{deg}\otimes\LL_{\bar\chi^d}\otimes\LL_{\psi_{\sum a}}
$$
where the product is taken over the roots of $g$. Now $\sum a=-a_{d-1}/a_d$, and $$\prod_a g'(a)=\prod_a a_d\prod_{g(b)=0,b\neq a}(b-a)=
$$
$$
=a_d^d\prod_{g(a)=g(b)=0,a\neq b}(a-b)=(-1)^{d(d-1)/2}a_d^{-(d-2)}\mathrm{disc}(g).$$

Therefore $\det(\FF_g)\otimes\hat\GGG$ is smooth on $\GG_{m,k}$, tamely ramified at zero and unramified at infinity, so it is geometrically constant. Looking at the Frobenius action at $0$, it must be the constant sheaf $\QQ$. We conclude that $\det(\FF_g)\cong\GGG$. 
\end{proof}

 \begin{prop} Let $h(x)=g(x-\frac{a_{d-1}}{da_d})$. Suppose that $p>2d+1$ and $h$ is not odd (for $d$ odd) or even (for $d$ even). Then the geometric monodromy group $G$ of $\FF_g$ is $\mathrm{GL}_{sp}(V)$ if $a_{d-1}\neq 0$ and $\mathrm{GL}_s(V)$ if $a_{d-1}=0$, where $V$ is the geometric generic stalk of $\FF_g$ and $s$ is the order of $\chi^d$.
\end{prop}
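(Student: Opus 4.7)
The plan is to pin down first the identity component $G^0$ and then the component group $G/G^0$ through the determinant $\det\FF_g$. The overall organization mirrors the additive case of Proposition~\ref{SL}.

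For $G^0$: the sheaf $\FF_g$ is irreducible, smooth on $\GG_{m,k}$, tame at $0$, and has the rank-$d$ wild local structure at infinity described in Proposition~\ref{monoinf}. Applying the multiplicative analogue of the classification used in the Artin--Schreier case (the Kummer counterpart of \cite[Propositions 11.1 and 11.6]{such2000monodromy}) reduces the candidates for $G^0$ to $\mathrm{SL}(V)$, $\mathrm{Sp}(V)$, $\mathrm{SO}(V)$, or a finite group. The hypothesis $p>2d+1$ rules out finite monodromy along the lines of \cite[Theorem 19]{katz-monodromy}, leaving only the classical group possibilities to discriminate among.

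To exclude $\mathrm{Sp}(V)$ and $\mathrm{SO}(V)$, I would show that $\FF_g$ is not geometrically self-dual (up to a rank-one twist). Via $D\circ FT_\psi=[-1]^\star FT_\psi\circ D(1)$, the Verdier dual of $\FF_g$ is, up to a Tate twist, $[-1]^\star\FF_{g,\bar\chi}$, where $\FF_{g,\bar\chi}$ is built from $\bar\chi$ instead of $\chi$. A self-duality $\FF_g\cong[-1]^\star\FF_{g,\bar\chi}$ up to twist forces $\chi^2=\mathbf{1}$ together with an isomorphism $\LL_{\chi(g(x))}\cong\LL_{\chi(g(-x+c))}$ on $\AAA^1$ for some constant $c$. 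Normalizing by the translation $h(x)=g(x-a_{d-1}/(da_d))$ so that the coefficient of $x^{d-1}$ vanishes, this reduces to $h(x)=\pm h(-x)$, i.e.\ $h$ is odd (for $d$ odd) or even (for $d$ even). The hypothesis excludes exactly this case, so $G^0=\mathrm{SL}(V)$.

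Once $G^0=\mathrm{SL}(V)$, $G$ is the preimage under $\det:\mathrm{GL}(V)\to\GG_m$ of the geometric monodromy of $\det\FF_g$. By Corollary~\ref{det2}, $\det\FF_g$ is geometrically isomorphic to $\LL_{\bar\chi^d}\otimes\LL_{\psi_{-a_{d-1}/a_d}}$. The Kummer factor contributes monodromy of order $s=\mathrm{ord}(\chi^d)$, while the Artin--Schreier factor is trivial when $a_{d-1}=0$ and has monodromy $\mu_p$ when $a_{d-1}\neq 0$. Since $(s,p)=1$, the total image of the determinant is $\mu_s$ in the first case and $\mu_{sp}$ in the second, giving $G=\mathrm{GL}_s(V)$ and $G=\mathrm{GL}_{sp}(V)$ respectively. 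The main obstacle I expect lies in Step~1: formalizing a clean Kummer analogue of the Such--Katz classification with only the local data of Proposition~\ref{monoinf} as input, in particular ruling out imprimitive and tensor-decomposable cases and any sporadic finite or exceptional Lie-type monodromy. A secondary point is that Corollary~\ref{det2} was stated assuming all roots of $g$ lie in $k$; for the purposes of the \emph{geometric} monodromy group one can freely extend scalars, but one should note that the geometric isomorphism class depends only on $-a_{d-1}/a_d$ and $\chi^d$, not on the rationality of the individual roots.
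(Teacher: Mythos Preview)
Your overall strategy---pin down $G^0$, then read off $G$ from the determinant---matches the paper, and your final step (the determinant calculation via Corollary~\ref{det2}) is exactly what the paper does. The substantive differences are in Steps~1 and~2.

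For Step~1, the ``main obstacle'' you flag is not an obstacle: the paper does not attempt a Kummer analogue of Such's classification, but invokes \cite[Theorem 7.6.3.1]{katz1990esa} directly. That theorem (together with Deligne's semisimplicity of $G^0$ for pure sheaves \cite[Corollaire 1.3.9]{deligne1980conjecture}) applies to $\FF_g$ under the hypothesis $p>2d+1$ and yields immediately that $\FF_g$ is Lie-irreducible with $G^0\in\{\mathrm{SL}(V),\mathrm{Sp}(V),\mathrm{SO}(V)\}$, the latter two being possible only when $\chi^d$ is trivial or quadratic respectively. No separate argument ruling out finite, imprimitive, or tensor-decomposable monodromy is needed.

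For Step~2, your Fourier-duality approach differs from the paper's and contains a small slip: self-duality \emph{up to a rank-one twist} does not force $\chi^2=\mathbf 1$, since the Kummer discrepancy between $\LL_{\bar\chi}$ and $\LL_\chi$ at $I_\infty$ is absorbed by the tame part of the twist. What it does force, by matching the wild Artin--Schreier summands at $I_\infty$ via Proposition~\ref{monoinf}, is that the roots of $g$ are symmetric about some point---hence, after the translation to $h$, that $h$ is even or odd, which is what you want. The paper proceeds differently: it first replaces $\FF_g$ by $\FF_h=\FF_g\otimes\LL_{\psi_{-a_{d-1}/da_d}}$ (same $G^0$); since $\det\FF_h$ is then purely Kummer, \cite[Proposition 6]{katz-monodromy} gives that the component group of the monodromy of $\FF_h$ is cyclic of order prime to $p$, so a pullback by $[e]:\GG_m\to\GG_m$ with $(e,p)=1$ makes the monodromy connected and hence \emph{genuinely} self-dual if $G^0$ is $\mathrm{Sp}$ or $\mathrm{SO}$. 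One then reads off the root-symmetry condition at $I_\infty$ with no twist ambiguity. Your route is more direct but requires tracking the unknown rank-one twist; the paper's route sidesteps that by first arranging for the monodromy to be connected.
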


\begin{proof}
 Since $\LL_{\chi(g)}$ is the translate of $\LL_{\chi(h)}$ by $a:=\frac{a_{d-1}}{da_d}$, we have $\FF_g=\FF_h\otimes\LL_{\psi_a}$. If $G$ (respectively $G'$) is the geometric monodromy group of $\FF_g$ (resp. $\FF_h$), we have then $G\subseteq \mu_p\cdot G'$ and $G'\subseteq \mu_p\cdot G$. In particular, the unit connected components $G_0$ and $G'_0$ are the same. Since $\FF_g$ is pure, $G_0$ is a semisimple group \cite[Corollaire 1.3.9]{deligne1980conjecture}, so by \cite[Theorem 7.6.3.1]{katz1990esa}, $\FF_g$ is Lie-irreducible and $G_0$ is one of $\mathrm{SL}(V)$, $\mathrm{Sp}(V)$ (only possible if $\chi^d={\mathbf 1}$) or $\mathrm{SO}(V)$ (only possible if $\chi^d$ has order $2$). We will see that, under the given hypotheses, the last two options are not possible. 

By corollary \ref{det2}, the determinant of $\FF_h$ is geometrically isomorphic to $\LL_{\bar\chi^d}$. By \cite[Proposition 6]{katz-monodromy}, the factor group $G'/G'_0$ is cyclic of finite prime to $p$ order. In particular, there exists some prime to $p$ integer $e$ such that the geometric monodromy group of the pull-back $[e]^\star\FF_h$ is in $G'_0$, where $[e]:\GG_{m,k}\to\GG_{m,k}$ is the $e$-th power map. If $G'_0=\mathrm{Sp}(V)$ or $\mathrm{SO}(V)$, $[e]^\star\FF_h$ would then be geometrically self-dual. By proposition \ref{monoinf}, its restriction to the inertia group $I_\infty$ is the direct sum of $[e]^\star\LL_{\psi_{b}}\otimes\LL_{\bar\chi^e}$ taken over the roots $b$ of $h$. Its dual is then the direct sum on $[e]^\star\LL_{\psi_{-b}}\otimes\LL_{\chi^e}$. Given that the dual of $[e]^\star\LL_{\psi_b}$ is $[e]^\star\LL_{\psi_{-b}}$, in order for this to be self-dual as a representation of $I_\infty$ a necessary condition is that the set of roots of $h$ is symmetric with respect to $0$, that is, that $h$ is either even or odd (since it is a priori square-free).

So, if $h$ is neither even nor odd, $G_0$ is $\mathrm{SL}(V)$. Then $G$ is $\mathrm{SL}_n(V)$, where $n$ is the geometric order of the determinant of $\FF_g$. By corollary \ref{det2}, this order is $sp$ if $a_{d-1}\neq 0$ and $s$ if $a_{d-1}=0$.
\end{proof}

 \begin{cor} Let $h(x)=g(x-\frac{a_{d-1}}{da_d})$. Suppose that $p>2d+1$ and $h$ is not odd (for $d$ odd) or even (for $d$ even). Then we have an estimate
$$
\left|\sum_{x\in k_r}\chi(\Nm_{k_r/k}(f(x)))\right|\leq C_{d,r}q^{\frac{r+1}{2}}
$$
where
$$
C_{d,r}=\sum_{i=0}^{r}|i-1|\left({{d-1+r-i}\choose{r-i}}{{d}\choose i}-{{d-2+r-i}\choose{r-i}}{{d-1}\choose i}\right)
$$
unless $r=d$, $\chi^d$ is trivial and $a_{d-1}=0$, in which case there exists an $\ell$-adic unit $\beta\in\QQ$ with $|\beta|=q^{\frac{d}{2}}$ such that
$$
\left|\sum_{x\in k_r}\chi(\Nm_{k_r/k}(f(x)))-(-1)^dq\beta \right|\leq C_{d,r}q^{\frac{r+1}{2}}.
$$
If $k$ contains all roots of $g$, then $\beta=\chi((-1)^{d(d-1)/2}a_d^{-(d-2)}\mathrm{disc}(g))g(\chi,\psi)^d$.
\end{cor}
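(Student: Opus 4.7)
The plan is to follow the template of Proposition \ref{SL} in the multiplicative setting of Corollary \ref{estimatemult}. I would start from the Adams-power decomposition
\begin{align*}
\sum_{x\in k_r}\chi(\Nm_{k_r/k}(f(x)))&=\sum_{i=0}^r(-1)^{i-1}(i-1)\Tr(\Frob_k,\HH^1_c(\AAA^1_{\bar k},\Sym^{r-i}\FF_g\otimes\wedge^i\FF_g))\\
&\quad-\sum_{i=0}^r(-1)^{i-1}(i-1)\Tr(\Frob_k,\HH^2_c(\AAA^1_{\bar k},\Sym^{r-i}\FF_g\otimes\wedge^i\FF_g)),
\end{align*}
and reduce the proof to (i) locating the indices $i$ where $\HH^2_c$ is nonzero and computing its Frobenius trace, and (ii) bounding the $\HH^1_c$-sum by $C_{d,r}q^{(r+1)/2}$.

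By the previous proposition $G_0=\mathrm{SL}(V)$ with $\dim V=d$, and by \cite[Corollary 4.2]{rojas-leon-biotwbfac} the $\mathrm{SL}(V)$-invariants of $\Sym^{r-i}V\otimes\wedge^i V$ vanish unless $r=d$ and $i\in\{d-1,d\}$; in those two cases each invariant subspace is one-dimensional and equal to the line $\det V$ (sitting inside $V\otimes\wedge^{d-1}V$ as the $\mathrm{SL}$-equivariant lift of the wedge surjection $v\otimes\omega\mapsto v\wedge\omega$). Since $G/\mathrm{SL}(V)$ acts on $\det V$ via its determinant character, of order $sp$ if $a_{d-1}\neq 0$ and $s$ if $a_{d-1}=0$, full $G$-invariants arise precisely when $G=\mathrm{SL}(V)$, that is, when $a_{d-1}=0$, $\chi^d=\mathbf{1}$, and $r=d$. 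Outside this exceptional case every $\HH^2_c$ vanishes and the bound reduces to that of Corollary \ref{estimatemult}.

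In the exceptional case, both one-dimensional invariants will be identified, via the wedge map, with the geometrically constant sheaf $\det\FF_g$ as arithmetic $\pi_1$-representations; letting $\beta$ be the scalar by which $\Frob_k$ acts on $\det\FF_g$, each exceptional $\HH^2_c$ then contributes $q\beta$ to the Frobenius trace. Purity of $\FF_g$ (weight $1$, as Fourier transform of the pure middle extension $\LL_{\chi(g)}$) forces $|\beta|=q^{d/2}$, and Corollary \ref{det2} applied with $a_{d-1}=0$ and $\chi^d=\mathbf{1}$ gives the explicit value when the roots of $g$ all lie in $k$. The weighted sum telescopes via $(-1)^{d-2}(d-2)+(-1)^{d-1}(d-1)=(-1)^{d-1}$, yielding a total $\HH^2_c$-contribution of $(-1)^{d-1}q\beta$, whose subtraction produces the announced correction $(-1)^d q\beta$.

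For (ii) I would repeat the Ogg-Shafarevic argument of Corollary \ref{estimatemult} verbatim outside the exceptional indices. At $i\in\{d-1,d\}$ in the exceptional case the extra $+1$ from $\dim\HH^2_c=1$ is absorbed by the observation that the $G$-invariant copy of $\det\FF_g$ inside $\Sym^{r-i}\FF_g\otimes\wedge^i\FF_g$ is a geometrically constant subsheaf and hence a tame summand at $\infty$; combined with the slope bound $\leq 1$ at $\infty$ from Proposition \ref{monoinf} this forces $\Swan_\infty\leq\mathrm{rank}-1$, and
$$\dim\HH^1_c=1+\Swan_\infty-\mathrm{rank}_0\leq\mathrm{rank}-\mathrm{rank}_0=\binom{d-1+r-i}{r-i}\binom{d}{i}-\binom{d-2+r-i}{r-i}\binom{d-1}{i},$$
matching the non-exceptional bound. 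The main technical obstacle will be the simultaneous arithmetic identification of both exceptional invariant lines with $\det\FF_g$ carrying the same Frobenius eigenvalue $\beta$, so that their $\HH^2_c$-contributions really collapse into a single clean correction term.
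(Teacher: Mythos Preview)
Your proposal is correct and follows the paper's proof essentially step for step: the Adams-power decomposition, the $\mathrm{SL}(V)$-invariant analysis isolating the exceptional indices $(r,i)\in\{(d,d-1),(d,d)\}$, the telescoping of the $\HH^2_c$-contributions to $(-1)^{d-1}\Tr(\Frob_k,\HH^2_c(\AAA^1_{\bar k},\det\FF_g))$, and the slope-zero observation at infinity giving the refined $\HH^1_c$ bound at the exceptional indices. The concern you flag at the end is not a real obstacle: the canonical arithmetic isomorphism $\wedge^{d-1}\FF_g\cong(\det\FF_g)\otimes\FF_g^\vee$ (together with geometric irreducibility of $\FF_g$) identifies both exceptional $\HH^2_c$'s with $\det\FF_g$ Tate-twisted by $-1$, hence both carry the same Frobenius eigenvalue $q\beta$.
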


\begin{proof}
 By the previous proposition, the monodromy group $G$ of $\FF_g$ is $\mathrm{GL}_{sp}(V)$ if $a_{d-1}\neq 0$ and $\mathrm{GL}_s(V)$ if $a_{d-1}=0$. We proceed as in the proof of proposition \ref{SL}: $G_0$ has no invariants on $\Sym^{r-i}V\otimes\wedge^i V$ unless $r=d$ and $i=r,r-1$, in which case the invariant space is one-dimensional and $G$ acts on it via multiplication by the determinant. So the action of $G$ does not have invariants unless $a_{d-1}=0$ and $\chi^d$ is trivial (i.e. $m|d$) by corollary \ref{det2}. In that case we obtain the estimate as in \ref{SL}, using the value for $C_{d,r}$ computed in corollary \ref{estimatemult}.

In the exceptional case, we have again
$$
\sum_{i=0}^r(-1)^{i-1}(i-1)\Tr(\Frob_k,\HH^2_c(\AAA^1_{\bar k},\Sym^{r-i}\FF_g\otimes\wedge^i\FF_g))=
$$
$$
=(-1)^{r-1}\Tr(\Frob_k,\HH^2_c(\AAA^1_{\bar k},\det\FF_g)).
$$
Now $\det\FF_g$ is geometrically constant of weight $d$, so there exists an $\ell$-adic unit $\beta$ with $|\beta|=1$ such that $\det\FF_g=(\beta q^{\frac{d}{2}})^{deg}$. Then $\Tr(\Frob_k,\HH^2_c(\AAA^1_{\bar k},\det\FF_g))=\beta q^{\frac{d}{2}+1}$. If $k$ contains all roots of $g$, the value of $\beta$ is given in corollary \ref{det2}. We conclude as in proposition \ref{SL} using that, for the two values of $i$ for which $\HH^2_c(\AAA^1_{\bar k},\Sym^{r-i}\FF_g\otimes\wedge^i\FF_g)$ is one-dimensional, the sheaf $\Sym^{r-i}\FF_g\otimes\wedge^i\FF_g$ has at least one slope equal to $0$ at infinity, so
$$
\dim\HH^1_c(\AAA^1_{\bar k},\Sym^{r-i}\FF_g\otimes\wedge^i\FF_g)=1-\chi(\AAA^1_{\bar k},\Sym^{r-i}\FF_g\otimes\wedge^i\FF_g)=
$$
$$
=1+\mathrm{Swan}_\infty(\Sym^{r-i}\FF_g\otimes\wedge^i\FF_g)-\mathrm{rank}_0(\Sym^{r-i}\FF_g\otimes\wedge^i\FF_g)\leq
$$
$$
\leq\mathrm{gen.rank}(\Sym^{r-i}\FF_g\otimes\wedge^i\FF_g)-\mathrm{rank}_0(\Sym^{r-i}\FF_g\otimes\wedge^i\FF_g)=
$$
$$
={{d-1+r-i}\choose{r-i}}{{d}\choose i}-{{d-2+r-i}\choose{r-i}}{{d-1}\choose i}.
$$
\end{proof}

\section{Additive character sums for homothety invariant polynomials}

Let $f\in k_r[x]$ be a polynomial and $e|q-1$ an integer. Let $\Gamma_e\subseteq k^\star$ be the unique subgroup of $k^\star$ of index $e$. We say that $f$ is \emph{$\Gamma_e$-homothety invariant} if $f(\lambda x)=f(x)$ for every $\lambda\in\Gamma_e$. Equivalently, if $f(\lambda^ex)=f(x)$ for every $\lambda\in k^\star$. An argument similar to that in lemma \ref{translationinvariant} shows

\begin{lem}\label{homothetyinvariant}
 Let $f\in k_r[x]$ and $e|q-1$. The following conditions are equivalent:
\begin{itemize}
 \item[(a)] $f$ is $\Gamma_e$-homothety invariant.
 \item[(b)] There exists $g\in k_r[x]$ such that $f(x)=g(x^{\frac{q-1}{e}})$.
\end{itemize}
\end{lem}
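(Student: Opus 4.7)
The plan is as follows. Set $n := (q-1)/e$, which is the order of the cyclic group $\Gamma_e \subseteq k^\star$. The implication $(b) \Rightarrow (a)$ is immediate: for every $\lambda \in \Gamma_e$ we have $\lambda^n = 1$, so $g((\lambda x)^n) = g(\lambda^n x^n) = g(x^n)$.

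For the converse $(a) \Rightarrow (b)$, the plan is to exploit the fact that the homothety action of $\Gamma_e$ is diagonal in the monomial basis of $k_r[x]$. Writing $f(x) = \sum_i a_i x^i$ with $a_i \in k_r$, the condition $f(\lambda x) = f(x)$ becomes, by equating coefficients, $(\lambda^i - 1)a_i = 0$ for every $i \geq 0$ and every $\lambda \in \Gamma_e$. Fixing a generator $\mu$ of the cyclic group $\Gamma_e$ of order $n$, this forces $\mu^i = 1$ whenever $a_i \neq 0$, and hence $n \mid i$. Thus only monomials $x^{jn}$ can appear in $f$, and the polynomial $g(y) := \sum_j a_{jn} y^j \in k_r[y]$ satisfies $f(x) = g(x^n) = g(x^{(q-1)/e})$.

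There is essentially no obstacle: the simplification compared to Lemma \ref{translationinvariant} comes from the fact that the multiplicative action of $\Gamma_e$ preserves each monomial $x^i$ as a one-dimensional eigenspace, whereas the additive translation action mixes monomials and required an inductive Euclidean-division argument. Of course, one could reproduce a proof parallel to Lemma \ref{translationinvariant} by Euclidean division by $x^n - 1$, writing $f(x) = (x^n - 1)h(x) + r(x)$ with $\deg r < n$, observing that both $r$ and $h$ inherit $\Gamma_e$-homothety invariance, and concluding by induction on the degree; but the direct coefficient computation above is cleaner, so I would present it that way.
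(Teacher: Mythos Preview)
Your proof is correct. The paper does not spell out its own argument here but refers back to the method of Lemma~\ref{translationinvariant}, which would proceed by Euclidean division by $x^n-1$ (with $n=(q-1)/e$) and induction on the degree: from $f(x)=(x^n-1)h(x)+r(x)$ and $\lambda^n=1$ one sees that both $h$ and $r$ inherit $\Gamma_e$-invariance, and the base case $\deg r<n$ is handled by observing that $r(x)-r(1)$ vanishes on all $n$ elements of $\Gamma_e$, hence is identically zero. You take a genuinely different and shorter route, exploiting that the multiplicative action of $\Gamma_e$ is diagonal on the monomial basis of $k_r[x]$: since a generator $\mu$ of $\Gamma_e$ has exact order $n$, the condition $(\mu^i-1)a_i=0$ immediately forces $n\mid i$ whenever $a_i\neq 0$. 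This bypasses the induction entirely; the Euclidean-division approach is only really needed in the additive setting of Lemma~\ref{translationinvariant}, where translations mix monomials and no such diagonalization is available. Your final remark that one \emph{could} reproduce the inductive argument is accurate and shows you have both methods in hand.
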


Let $f\in k_r[x]$ be $\Gamma_e$-homothety invariant, $g\in k_r[x]$ of degree $d$ such that $f(x)=g(x^{\frac{q-1}{e}})$ and $\psi:k\to\QQ^\star$ a non-trivial additive character. Weil's bound gives in this case
$$
\left|\sum_{x\in k_r}\psi(\Tr_{k_r/k}(f(x)))\right|\leq \left(\frac{d(q-1)}{e}-1\right)q^{\frac{r}{2}}.
$$
On the other hand,
$$
\sum_{x\in k_r}\psi(\Tr_{k_r/k}(f(x)))=\psi(\Tr_{k_r/k}(f(0)))+\sum_{x\in k_r^\star}\psi(\Tr_{k_r/k}(g(x^{\frac{q-1}{e}})))=
$$
$$
=\psi(\Tr_{k_r/k}(f(0)))+\frac{q-1}{e}\sum_{\Nm_{k_r/k}(x)^e=1}\psi(\Tr_{k_r/k}(g(x)))=
$$
\begin{equation}\label{suma}
=\psi(\Tr_{k_r/k}(f(0)))+\frac{q-1}{e}\sum_{\mu^e=1}\sum_{\Nm_{k_r/k}(x)=\mu}\psi(\Tr_{k_r/k}(g(x))).
\end{equation}

For each $\mu$, we will estimate the sum $\sum_{\Nm_{k_r/k}(x)=\mu}\psi(\Tr_{k_r/k}(g(x)))$ using Weil descent. Fix a basis $\{\alpha_1,\ldots,\alpha_r\}$ of $k_r$ over $k$, and let $P(x_1,\ldots,x_r)=\prod_\sigma(\sigma(\alpha_1)x_1+\cdots+\sigma(\alpha_r)x_r)$, where the product is taken over all $\sigma\in\Gal(k_r/k)$. Since $P$ is $\Gal(k_r/k)$-invariant, its coefficients are in $k$. By construction, for every $(x_1,\ldots,x_r)\in k^r$ we have $P(x_1,\ldots,x_r)=\Nm_{k_r/k}(\alpha_1x_1+\cdots+\alpha_rx_r)$. Therefore
$$
\sum_{\Nm_{k_r/k}(x)=\mu}\psi(\Tr_{k_r/k}(g(x)))=\sum_{P(x_1,\ldots,x_r)=\mu}\psi(\Tr_{k_r/k}(g(\alpha_1x_1+\cdots+\alpha_rx_r))=
$$
$$
=\sum_{P(x_1,\ldots,x_r)=\mu}\psi\left(\sum_\sigma g^\sigma(\sigma(\alpha_1)x_1+\cdots+\sigma(\alpha_r)x_r)\right)
$$
where $g^\sigma$ is the polynomial obtained by applying $\sigma$ to the coefficients of $g$, and the sum is taken over all $r$-tuples $(x_1,\ldots,x_r)\in k^r$ such that $P(x_1,\ldots,x_r)=\mu$. By Grothendieck's trace formula, we get
\begin{equation}\label{lefschetz}
\sum_{\Nm_{k_r/k}(x)=\mu}\psi(\Tr_{k_r/k}(g(x)))=\sum_{i=0}^{2r-2}\Tr(\Frob_k|\HH^i_c(V_\mu\otimes\bar k,\LL_{\psi(G)}))
\end{equation}
where $V_\mu$ is the hypersurface defined in $\AAA^r_k$ by the equation $P(x_1,\ldots,x_r)=\mu$ and $G=\sum_\sigma g^\sigma(\sigma(\alpha_1)x_1+\cdots+\sigma(\alpha_r)x_r)\in k[x]$ (since it is $\Gal(k_r/k)$-invariant).

\begin{prop}
 Suppose that $g$ has degree $d$ prime to $p$. For any $\mu\in k^\star$, $\HH^i_c(V_\mu\otimes\bar k,\LL_{\psi(G)})=0$ for $i\neq r-1$ and $\dim\HH^{r-1}_c(V_\mu\otimes\bar k,\LL_{\psi(G)})=rd^{r-1}$.
\end{prop}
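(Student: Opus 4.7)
The plan is to change variables over $\bar k$ to convert $V_\mu$ into a standard torus hypersurface and $G$ into a Laurent polynomial in separated variables, and then invoke the Adolphson--Sperber--Denef--Loeser theorem on cohomology of exponential sums with nondegenerate Newton polytope.

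Since $\{\alpha_1,\ldots,\alpha_r\}$ is a $k$-basis of $k_r$, the matrix $(\sigma(\alpha_j))_{\sigma,j}$ is invertible over $\bar k$, so the change of coordinates $y_\sigma:=\sum_j \sigma(\alpha_j)x_j$ defines an automorphism of $\AAA^r_{\bar k}$. It carries $V_\mu\otimes\bar k$ to the hypersurface $\prod_\sigma y_\sigma=\mu$, which for $\mu\neq 0$ is contained in $\GG_{m,\bar k}^r$, and carries $G$ to the decoupled expression $\sum_\sigma g^\sigma(y_\sigma)$. Projecting away one coordinate and substituting $y_r=\mu/(y_1\cdots y_{r-1})$ identifies $V_\mu\otimes\bar k$ with $\GG_{m,\bar k}^{r-1}$ and $\LL_{\psi(G)}$ with $\LL_{\psi(H)}$ for the Laurent polynomial
$$H(y_1,\ldots,y_{r-1})=\sum_{i=1}^{r-1}g_i(y_i)+g_r\!\left(\frac{\mu}{y_1\cdots y_{r-1}}\right),$$
where $g_1,\ldots,g_r$ are the Galois conjugates of $g$ with (nonzero) leading coefficients $a_1,\ldots,a_r$. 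The Newton polytope $\Delta\subset\Real^{r-1}$ of $H$ is the simplex with vertices $de_1,\ldots,de_{r-1},-d\mathbf{1}$, and the origin is the barycenter of these vertices, hence interior to $\Delta$. After translating $-d\mathbf{1}$ to $0$, the remaining edge vectors form the matrix $d(I+J)$ of determinant $d^{r-1}r$, so $\mathrm{vol}(\Delta)=rd^{r-1}/(r-1)!$.

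By the Adolphson--Sperber theorem for convenient nondegenerate Laurent polynomials, once nondegeneracy is checked one obtains $\HH^i_c(\GG_{m,\bar k}^{r-1},\LL_{\psi(H)})=0$ for $i\neq r-1$ and $\dim\HH^{r-1}_c=(r-1)!\,\mathrm{vol}(\Delta)=rd^{r-1}$, which is the desired conclusion. The main technical step I expect to carry out is the nondegeneracy verification: for every proper face $\tau$ of $\Delta$ the face polynomial $H_\tau$ must have no critical points in $(\bar k^\star)^{r-1}$. Each proper face corresponds to a proper subset $S$ of the $r$ vertices, and all intermediate monomials $ae_i$ and $-a\mathbf{1}$ (with $a<d$) lie in the interior of $\Delta$, so $H_\tau$ only retains vertex monomials in $S$. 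If $-d\mathbf{1}\notin S$ then $H_\tau=\sum_{i\in T}a_iy_i^d$ for some nonempty $T\subseteq\{1,\ldots,r-1\}$, and $\partial_{y_l}H_\tau=da_ly_l^{d-1}$ (for $l\in T$) is nonzero on the torus since $a_l\neq 0$ and $p\nmid d$. If $-d\mathbf{1}\in S$ then $T:=\{i:de_i\in S\}$ is proper in $\{1,\ldots,r-1\}$ (else $\tau=\Delta$), and for any $l\notin T$ one has $\partial_{y_l}H_\tau=-da_r\mu^dy_l^{-1}(y_1\cdots y_{r-1})^{-d}\neq 0$ on the torus. This finishes the nondegeneracy check and closes the argument.
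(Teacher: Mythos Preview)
Your argument is correct and takes a genuinely different route from the paper. After the same linear change of variables to reach $\{y_1\cdots y_r=\mu\}$ with sheaf $\boxtimes_\sigma\LL_{\psi(g^\sigma)}$, the paper does not project to $\GG_m^{r-1}$; instead it reads the cohomology as the stalk at $\mu$ of $\R m_!(\boxtimes_\sigma\LL_{\psi(g^\sigma)})$ for the multiplication map $m:\GG_m^r\to\GG_m$, i.e.\ as an iterated multiplicative convolution of the $\LL_{\psi(g^\sigma)}[1]$, and then invokes Katz's theorem (\emph{Gauss Sums, Kloosterman Sums, and Monodromy Groups}, Theorem~5.1) that the class of objects $\GGG[1]$ with $\GGG$ smooth on $\GG_m$, tame at $0$ and totally wild at $\infty$ is stable under convolution, with explicit rank and Swan formulas. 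Your toric approach is more direct and self-contained: a single nondegeneracy check---made easy by your observation that every non-vertex support point of $H$ lies in the interior of the simplex---settles both the vanishing and the dimension at once. The convolution approach, in exchange, delivers extra structural information as a by-product (smoothness of the direct image on all of $\GG_m$, tameness at $0$, total wildness at $\infty$ with Swan conductor $d^r$, and purity uniformly in $\mu$), and it is the same template the paper reuses in Section~5 for the multiplicative-character case, where a Newton-polytope argument would be less convenient.
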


\begin{proof}
 Over $k_r$, the map $(x_1,\ldots,x_r)\mapsto (\sigma(\alpha_1)x_1+\cdots+\sigma(\alpha_r)x_r)_{\sigma\in\Gal(k_r/k)}$ is a (linear) isomorphism between $\AAA^r_{k_r}$ and $\AAA^{\Gal(k_r/k)}_{k_r}$. The pull-back of $P$ under this automorphism is just $x_1\cdots x_r$. So $V_\mu\otimes\bar k$ is isomorphic to the hypersurface $x_1\cdots x_r=\mu$, and the sheaf $\LL_{\psi(G)}$ corresponds under this isomorphism to the sheaf $\LL_{\psi(\sum_\sigma g^\sigma(x_\sigma))}=\boxtimes_\sigma\LL_{\psi(g^\sigma)}$ where $\LL_{\psi(g^\sigma)}$ is the pull-back of the Artin-Schreier sheaf $\LL_\psi$ by $g^\sigma$.

For every $\sigma\in\Gal(k_r/k)$, the sheaf $\LL_{\psi(g^\sigma)}$ is smooth on $\AAA^1_{\bar k}$ of rank one, with slope $d$ at infinity. \cite[Theorem 5.1]{katz1988gsk} shows that the class of objects of the form $\GGG[1]$ where $\GGG$ is a smooth $\QQ$-sheaf on $\GG_{m,\bar k}$, tamely ramified at $0$ and totally wild at infinity is invariant under convolution. In particular, if $m:\GG_{m,\bar k}^{\Gal(k_r/k)}\to\GG_{m,\bar k}$ is the multiplication map, $\R^im_!(\boxtimes_\sigma\LL_{\psi(g^\sigma)})=0$ for $i\neq r-1$ and $\R^{r-1}m_!(\boxtimes_\sigma\LL_{\psi(g^\sigma)})$ is smooth on $\GG_{m,\bar k}$ of rank $rd^{r-1}$, tamely ramified at $0$ and totally wild at infinity with Swan conductor $d^r$ \cite[Theorem 5.1(4,5)]{katz1988gsk}. Taking the fibre at $\mu$ proves the proposition using proper base change.
\end{proof}

\begin{cor}
 Suppose that $g$ has degree $d$ prime to $p$. Then
$$
\left|\sum_{x\in k_r^\star}\psi(\Tr_{k_r/k}(f(x)))\right|\leq rd^{r-1}(q-1)q^{\frac{r-1}{2}}
$$
\end{cor}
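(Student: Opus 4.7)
The plan is to combine the two displayed identities (\ref{suma}) and (\ref{lefschetz}) with the vanishing/dimension statement in the preceding proposition, and then apply Deligne's weight bound.

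First I would use the second line of (\ref{suma}) to write
$$
\sum_{x\in k_r^\star}\psi(\Tr_{k_r/k}(f(x)))=\frac{q-1}{e}\sum_{\mu^e=1}\sum_{\Nm_{k_r/k}(x)=\mu}\psi(\Tr_{k_r/k}(g(x))),
$$
so it suffices, for each $e$-th root of unity $\mu$, to estimate the inner fibre sum. Next I would apply (\ref{lefschetz}) to rewrite this fibre sum as $\sum_{i=0}^{2r-2}\Tr(\Frob_k|\HH^i_c(V_\mu\otimes\bar k,\LL_{\psi(G)}))$.

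Now I invoke the previous proposition: since $\deg g=d$ is prime to $p$, we have $\HH^i_c(V_\mu\otimes\bar k,\LL_{\psi(G)})=0$ for $i\neq r-1$, and $\dim\HH^{r-1}_c(V_\mu\otimes\bar k,\LL_{\psi(G)})=rd^{r-1}$. Since $\LL_{\psi(G)}$ is the pull-back of the Artin--Schreier sheaf, it is pure of weight $0$, so by Deligne's theorem (Weil II) the $\HH^{r-1}_c$ is mixed of weights $\leq r-1$. Consequently, each of the $rd^{r-1}$ Frobenius eigenvalues has archimedean absolute value $\leq q^{(r-1)/2}$, and hence
$$
\left|\sum_{\Nm_{k_r/k}(x)=\mu}\psi(\Tr_{k_r/k}(g(x)))\right|\leq rd^{r-1}q^{\frac{r-1}{2}}.
$$

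Finally, summing over the $e$ values of $\mu$ with $\mu^e=1$ and multiplying by the factor $(q-1)/e$ produced by the Weil-descent step yields
$$
\left|\sum_{x\in k_r^\star}\psi(\Tr_{k_r/k}(f(x)))\right|\leq \frac{q-1}{e}\cdot e\cdot rd^{r-1}q^{\frac{r-1}{2}}=rd^{r-1}(q-1)q^{\frac{r-1}{2}},
$$
as required. No step is really an obstacle: the heavy lifting (vanishing of the other cohomologies and the computation of the top-dimensional rank $rd^{r-1}$) has already been carried out in the previous proposition via Katz's convolution theorem, so the only remaining ingredient is Deligne's weight bound applied to the affine variety $V_\mu$ of dimension $r-1$.
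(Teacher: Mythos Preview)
Your proof is correct and follows essentially the same approach as the paper: both arguments combine (\ref{suma}) and (\ref{lefschetz}) with the preceding proposition, apply Deligne's weight bound to conclude that $\HH^{r-1}_c(V_\mu\otimes\bar k,\LL_{\psi(G)})$ is mixed of weights $\leq r-1$, and then sum over the $e$ roots of unity $\mu$. The paper additionally remarks (parenthetically, via \cite[Theorem 5.1(7)]{katz1988gsk}) that this cohomology is in fact pure of weight $r-1$, but this refinement is not needed for the stated bound.
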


\begin{proof}
 Since $\LL_{\psi(G)}$ is pure of weight $0$, $\HH^{r-1}_c(V_\mu\otimes\bar k,\LL_{\psi(G)})$ is mixed of weights $\leq r-1$ for every $\mu$ (in fact it is pure of weight $r-1$ by \cite[Theorem 5.1(7)]{katz1988gsk}). So the previous proposition together with (\ref{lefschetz}) implies 
$$
\left|\sum_{\Nm_{k_r/k}(x)=\mu}\psi(\Tr_{k_r/k}(g(x)))\right|\leq rd^{r-1}q^{\frac{r-1}{2}}
$$
for every $\mu\in k^\star$. We conclude by using (\ref{suma}).
\end{proof}

\section{Multiplicative character sums for homothety invariant polynomials}

Let $e|q-1$ an integer and $f(x)=g(x^{\frac{q-1}{e}})\in k_r[x]$ $\Gamma_e$-homothety invariant as in the previous section. Let $d=\deg(g)$ and $\chi:k^\star\to\QQ^\star$ a non-trivial multiplicative characer of order $m$. Weil's bound gives
$$
\left|\sum_{x\in k_r}\chi(\Nm_{k_r/k}(f(x)))\right|\leq \left(\frac{d(q-1)}{e}-1\right)q^{\frac{r}{2}}
$$
if $g$ is not an $m$-th power. On the other hand, we have
$$
\sum_{x\in k_r}\chi(\Nm_{k_r/k}(f(x)))=\chi(\Nm_{k_r/k}(f(0)))+\sum_{x\in k_r^\star}\chi(\Nm_{k_r/k}(g(x^{\frac{q-1}{e}})))=
$$
$$
=\chi(\Nm_{k_r/k}(f(0)))+\frac{q-1}{e}\sum_{\Nm_{k_r/k}(x)^e=1}\chi(\Nm_{k_r/k}(g(x)))=
$$
\begin{equation}\label{suma2}
=\chi(\Nm_{k_r/k}(f(0)))+\frac{q-1}{e}\sum_{\mu^e=1}\sum_{\Nm_{k_r/k}(x)=\mu}\chi(\Nm_{k_r/k}(g(x))).
\end{equation}

In order to estimate the sum $\sum_{\Nm_{k_r/k}(x)=\mu}\chi(\Nm_{k_r/k}(g(x)))$, we may and will assume without loss of generality that $g(0)\neq 0$: otherwise, writing $g(x)=x^ag_0(x)$ with $g_0(0)\neq 0$, 
$$
\sum_{\Nm_{k_r/k}(x)=\mu}\chi(\Nm_{k_r/k}(g(x)))=\sum_{\Nm_{k_r/k}(x)=\mu}\chi(\Nm_{k_r/k}(x^ag_0(x)))=
$$
$$
=\sum_{\Nm_{k_r/k}(x)=\mu}\chi(\Nm_{k_r/k}(x^a))\chi(\Nm_{k_r/k}(g_0(x)))=\chi(\mu)^a\sum_{\Nm_{k_r/k}(x)=\mu}\chi(\Nm_{k_r/k}(g_0(x))),
$$ 
with $|\chi(\mu)^a|=1$.

Let $P=\prod_\sigma(\sigma(\alpha_1)x_1+\cdots+\sigma(\alpha_r)x_r)$ be as in the previous section, then
$$
\sum_{\Nm_{k_r/k}(x)=\mu}\chi(\Nm_{k_r/k}(g(x)))=\sum_{P(x_1,\ldots,x_r)=\mu}\chi(\Nm_{k_r/k}(g(\alpha_1x_1+\cdots+\alpha_rx_r)))=
$$
$$
=\sum_{P(x_1,\ldots,x_r)=\mu}\chi\left(\prod_\sigma g^\sigma(\sigma(\alpha_1)x_1+\cdots+\sigma(\alpha_r)x_r)\right)
$$
so, by Grothendieck's trace formula,
\begin{equation}\label{lefschetz2}
\sum_{\Nm_{k_r/k}(x)=\mu}\chi(\Nm_{k_r/k}(g(x)))=\sum_{i=0}^{2r-2}\Tr(\Frob_k|\HH^i_c(V_\mu\otimes\bar k,\LL_{\chi(H)}))
\end{equation}
where $V_\mu$ is the same as in the previous section and $H(x_1,\ldots,x_r)=\prod_\sigma g^\sigma(\sigma(\alpha_1)x_1+\cdots+\sigma(\alpha_r)x_r)$, the product taken over the elements of $\Gal(k_r/k)$.

Over $k_r$, the map $(x_1,\ldots,x_r)\mapsto (\sigma(\alpha_1)x_1+\cdots+\sigma(\alpha_r)x_r)_{\sigma\in\Gal(k_r/k)}$ is an isomorphism betweem $\AAA^r_{k_r}$ and $\AAA^{\Gal(k_r/k)}_{k_r}$, and the pull-back of $P$ under this automorphism is $x_1\cdots x_r$. So $V_\mu\otimes\bar k$ is isomorphic to the hypersurface $x_1\cdots x_r=\mu$, and the sheaf $\LL_{\chi(H)}$ corresponds under this isomorphism to the sheaf $\LL_{\chi(\prod_\sigma g^\sigma(x_\sigma))}=\boxtimes_\sigma\LL_{\chi(g^\sigma)}$ where $\LL_{\chi(g^\sigma)}$ is the pull-back of the Kummer sheaf $\LL_\chi$ by $g^\sigma$. Thus $\dim\HH^i_c(V_\mu\otimes\bar k,\LL_{\chi(H)})=\dim\HH^i_c(\{x_1\cdots x_r=\mu\},\boxtimes_\sigma\LL_{\chi(g^\sigma)})$. By proper base change, the group $\HH^i_c(\{x_1\cdots x_r=\mu\},\boxtimes_\sigma\LL_{\chi(g^\sigma)})$ is the fibre at $\mu$ of the sheaf $\R^im_!(\boxtimes_\sigma\LL_{\chi(g^\sigma)})$, where $m:\AAA_{\bar k}^{\Gal(k_r/k)}\to\AAA^1_{\bar k}$ is the multiplication map. 

\begin{prop} Let $g_1,\ldots,g_r\in k_r[x]$ be square-free of degree $d$ with $g_i(0)\neq 0$, $m:\AAA_{k_r}^r\to\AAA^1_{k_r}$ the multiplication map and $K_r:=\R m_!(\LL_{\chi(g_1)}\boxtimes\cdots\boxtimes\LL_{\chi(g_r)})$. Suppose that $\chi^d$ is not trivial. Then $K_r=\LL_r[1-r]$ for a middle extension sheaf $\LL_r$ of generic rank $rd^{r-1}$ and pure of weight $r-1$ (on the open set where it is smooth), which is totally ramified at infinity and unipotent at $0$, with $\HH^1_c(\AAA^1_{\bar k},\LL_r)$ pure of weight $r$ and dimension $(d-1)^r$.
\end{prop}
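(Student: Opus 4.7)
The plan is to proceed by induction on $r$, using Katz's theory of multiplicative convolution on $\GG_m$ \cite{katz1988gsk}. For the base case $r=1$, the object $K_1 = \LL_{\chi(g_1)}$ is the middle extension of a rank-$1$ Kummer-type local system on $\AAA^1$; since $g_1$ is square-free with $g_1(0) \neq 0$ it is smooth (in particular unipotent) at $0$, and since $\chi^d$ is nontrivial it is totally ramified at $\infty$. Ogg-Shafarevich on $\AAA^1$ with $d$ tame singular points and nontrivial monodromy at $\infty$ shows $\HH^*_c$ is concentrated in degree $1$ with dimension $d-1 = (d-1)^1$, and purity of weight $1$ follows because this is the intersection cohomology of an irreducible pure lisse sheaf on an open curve in $\PP^1$.

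For the inductive step, factor $m: \AAA^r \to \AAA^1$ as $m_2 \circ (m_{r-1}\times\mathrm{id})$, where $m_2:\AAA^1\times\AAA^1\to\AAA^1$ is multiplication. The projection formula gives
$$K_r = \R m_{2!}(K_{r-1}\boxtimes \LL_{\chi(g_r)}),$$
so $K_r$ is the $!$-convolution on $\AAA^1$ of $K_{r-1}$ with $\LL_{\chi(g_r)}$. By the inductive hypothesis, $K_{r-1}=\LL_{r-1}[2-r]$ for a middle extension sheaf $\LL_{r-1}$ which is unipotent at $0$ and totally ramified at $\infty$; the same holds for $\LL_{\chi(g_r)}$, which is additionally smooth at $0$ since $g_r(0)\neq 0$. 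Restricting to $\GG_m$ (where the $\AAA^1$-convolution coincides with multiplicative convolution on the group), Katz's convolution theorem \cite[Theorem 5.1]{katz1988gsk} applies: the $!$-convolution of two such perverse sheaves (tame at $0$, totally ramified at $\infty$, pure) is a single perverse sheaf, i.e.\ a shifted middle extension $\LL_r[1-r]$, pure of weight $r-1$ on its smooth locus. The generic rank follows from the recurrence $r_r = r_{r-1}\cdot d + d^{r-1}\cdot 1 = rd^{r-1}$ (where $r_{r-1}=(r-1)d^{r-2}$), which can be checked directly by computing the $\HH^1_c$ of a generic fiber of $m_2$ via Ogg-Shafarevich. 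Total ramification at $\infty$ is preserved by Katz's local description of convolution at $\infty$, and unipotent at $0$ follows from the fact that $\LL_{\chi(g_r)}$ is smooth there and $\LL_{r-1}$ is unipotent.

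For the statement on $\HH^1_c$, combine the Leray spectral sequence for $m$ with the Künneth formula:
$$\HH^*_c(\AAA^1_{\bar k},K_r) = \HH^*_c(\AAA^r_{\bar k},\LL_{\chi(g_1)}\boxtimes\cdots\boxtimes\LL_{\chi(g_r)}) = \bigotimes_{i=1}^r \HH^*_c(\AAA^1_{\bar k},\LL_{\chi(g_i)}).$$
Each factor $\HH^*_c(\AAA^1_{\bar k},\LL_{\chi(g_i)})$, by the base case analysis, is concentrated in degree $1$ with dimension $d-1$ and pure of weight $1$. Hence the tensor product is concentrated in degree $r$, of dimension $(d-1)^r$, and pure of weight $r$. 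Since $K_r = \LL_r[1-r]$, this degree-$r$ group is canonically identified with $\HH^1_c(\AAA^1_{\bar k},\LL_r)$, yielding the claimed dimension and weight.

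The main technical obstacle is the careful application of Katz's convolution theorem in the tame-at-infinity setting, to show that the $!$-convolution is a single middle-extension sheaf with no punctual summands and to control the local structure at $0$ and $\infty$. The crucial input is that both factors are totally ramified at $\infty$, so no inertia invariants at infinity survive the convolution, which rules out the potentially troublesome summands in degrees other than the expected one.
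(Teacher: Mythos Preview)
Your induction scheme and the K\"unneth computation of $\HH^1_c(\AAA^1,\LL_r)$ agree with the paper. The gap is in the core of the inductive step: you invoke \cite[Theorem~5.1]{katz1988gsk}, but that theorem applies to sheaves that are \emph{lisse on $\GG_m$} and \emph{totally wild at infinity}. Neither hypothesis is satisfied here. The sheaf $\LL_{\chi(g_r)}$ has tame singularities on $\GG_m$ at each root of $g_r$, and for $r\geq 3$ the sheaf $\LL_{r-1}$ is singular on $\GG_m$ as well; moreover $\LL_{\chi(g_r)}$ is only \emph{tamely} ramified at infinity (the inertia acts through the nontrivial character $\chi^d$), not totally wild. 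This is precisely why the paper does \emph{not} cite Katz's theorem in this section (contrast Section~4, where the Artin--Schreier sheaves are totally wild at infinity and Theorem~5.1 does apply). Instead the paper analyzes the fibres of $m_2$ by hand: it shows $\HH^0_c$ and $\HH^2_c$ of each fibre vanish (with a separate argument for the cross $\{xy=0\}$ over $t=0$, which you do not treat), computes the generic rank by Ogg--Shafarevich for $\LL_{r-1}\otimes\sigma_t^\star\LL_{\chi(g_r)}$, and then deduces purity, the middle extension property, and total ramification at $\infty$ from weight arguments that ultimately bootstrap from the K\"unneth purity of $\HH^1_c(\AAA^1,\LL_r)$.

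Your justification of unipotence at $0$ (``$\LL_{\chi(g_r)}$ is smooth there and $\LL_{r-1}$ is unipotent'') is also not valid: $!$-convolution does not transport local monodromy in this way, and a tame nontrivial character could in principle appear. The paper establishes unipotence at $0$ by an explicit weight count: it identifies $\LL_r^{I_0}$ with the weight-$<r$ part of $\HH^1_c(\GG_m,\LL_r)=\bigotimes_i\HH^1_c(\GG_m,\LL_{\chi(g_i)})$, computes that this part has $\sum_{i<r}\binom{r}{i}(d-1)^i$ eigenvalues, invokes \cite[Theorem~7.0.7]{katz1988gsk} to translate each weight-$i$ eigenvalue into a unipotent Jordan block of size $r-i$, and checks that the total size $\sum_{i<r}\binom{r}{i}(d-1)^i(r-i)=rd^{r-1}$ exhausts the generic rank. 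Without this, the induction does not close, since the unipotence of $\LL_{r-1}$ at $0$ is used in the Ogg--Shafarevich bookkeeping at the next step.
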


\begin{proof}
 We will proceed by induction, as in \cite[Th\'eor\`eme 7.8]{deligne569application}. For $r=1$, $\LL_r=\LL_{\chi(g_1)}$ and all results are well known (see e.g. \cite{katz2002estimates}). The sheaf is smooth of rank $1$ on the complement of the set of roots of $g_1$, and the monodromy group at a root $\alpha$ acts via the non-trivial character $\chi$, so $\LL_{\chi(g_1)}$ is a middle extension at $\alpha$.  

Suppose everything has been proven for $r-1$. Then
$$
K_r=\R m_!(\LL_{\chi(g_1)}\boxtimes\cdots\boxtimes\LL_{\chi(g_r)})=\R m_{2!}(\R m_{1!}(\LL_{\chi(g_1)}\boxtimes\cdots\boxtimes\LL_{\chi(g_{r-1})})\boxtimes\LL_{\chi(g_r)})=
$$
$$
=\R m_{2!}(K_{r-1}\boxtimes\LL_{\chi(g_r)})=\R m_{2!}(\LL_{r-1}[2-r]\boxtimes\LL_{\chi(g_r)})
$$
where $m_1:\AAA^{r-1}_{k_r}\to\AAA^1_{k_r}$ and $m_2:\AAA^2_{k_r}\to\AAA^1_{k_r}$ are the multiplication maps.

The fibre of $K_r$ at $t\in\bar k$ is then $\R\Gamma_c(\{xy=t\}\subseteq\AAA^2_{\bar k},\LL_{r-1}\boxtimes\LL_{\chi(g_r)})[2-r]$. If $t\neq 0$, $\{xy=t\}$ is isomorphic to $\GG_m$ via the projection on $x$, so the fibre is $\R\Gamma_c(\GG_{m,\bar k},\LL_{r-1}\boxtimes\sigma_t^\star\LL_{\chi(g_r)})[2-r]$, where $\sigma_t:\GG_{m,\bar k}\to\GG_{m,\bar k}$ is the involution $x\mapsto t/x$. Since $\LL_{r-1}$ is totally ramified at $0$ (and unramified at infinity) and $\sigma_t^\star\LL_{\chi(g_r)}$ is unramified at $0$ (and totally ramified at infinity), their tensor product is totally ramified at both $0$ and infinity. In particular, its $\HH^2_c$ is vanishes. On the other hand, $\LL_{r-1}$ and $\LL_{\chi(g_r)}$ do not have punctual sections \cite[Corollary 6 and Proposition 9]{katz2003semicontinuity}, so neither does $\LL_{r-1}\otimes\sigma_t^\star\LL_{\chi(g_r)}$ and thus its $\HH^0_c$ vanishes. We conclude that the restriction of $K_r$ to $\GG_m$ is a single sheaf placed in degree $1+(r-2)=r-1$.

The fibre of $K_r$ at $0$ is $\R\Gamma_c(\{xy=0\}\subseteq\AAA^2_{\bar k},\LL_{r-1}\boxtimes\LL_{\chi(g_r)})[2-r]$. The group $\HH^2_c(\{xy=0\},\LL_{r-1}\boxtimes\LL_{\chi(g_r)})$ vanishes, because so does $\HH^2_c$ of its restriction to $x=0$ (which is a constant times $\LL_{\chi(g_r)}$, totally ramified at infinity) and to $y=0$ (which is a constant times $\LL_{r-1}$, also totally ramified at infinity). The group $\HH^0_c$ also vanishes, because neither the restiction of $\LL_{r-1}\boxtimes\LL_{\chi(g_r)}$ to $x=0$ nor its restriction to $y=0$ have punctual sections. So the stalk of $K_r$ at $0$ is also concentrated in degree $r-1$.

Once we know $K_r$ is a single sheaf $\LL_r=\R^{r-1}m_!(\LL_{\chi(g_1)}\boxtimes\cdots\boxtimes\LL_{\chi(g_r)})$, since $\HH^i_c(\AAA^1_{\bar k},\LL_{\chi(g_i)})=0$ for $i\neq 1$ and has dimension $d-1$ and is pure of weight $1$ for $i=1$ we get, by K\"unneth, that $\HH^i_c(\AAA^1_{\bar k},\LL_r)=0$ for $i\neq 1$ and it has dimension $(d-1)^r$ and is pure of weight $r$ for $i=1$. Similarly, since the inverse image of $\GG_{m,\bar k}$ under the multiplication map is $\GG_{m,\bar k}^r$, $\HH^1_c(\GG_{m,\bar k},\LL_r)=0$ for $i\neq 1$ and it has dimension $d^r$ for $i=1$. In particular, the rank of $\LL_r$ at $0$ is $\chi(\AAA^1_{\bar k},\LL_r)-\chi(\GG_{m,\bar k},\LL_r)=d^r-(d-1)^r$.

Let $t\in \bar k$ be a point which is not the product of a ramification point of $\LL_r$ and a ramification point of $\LL_{\chi(g_r)}$. Then at every point of $\GG_{m,\bar k}$ at least one of $\LL_{r-1}$, $\sigma_t^\star\LL_{\chi(g_r)}$ is smooth. Since $\LL_{r-1}$ has unipotent monodromy at $0$ and $\sigma_t^\star\LL_{\chi(g_r)}$ is unramified at $\infty$, by the Ogg-Shafarevic formula we have
$$
-\chi(\GG_{m,\bar k},\LL_{r-1})=\Swan_\infty\LL_{r-1}+\sum_{s\in \bar k^\star}(\Swan_s\LL_{r-1}+\mathrm{drop}_s\LL_{r-1})
$$
and
$$
-\chi(\GG_{m,\bar k},\sigma_t^\star\LL_{\chi(g_r)})=\Swan_0\LL_{\chi(g_r)}+\sum_{s\in \bar k^\star}(\Swan_{t/s}\LL_{\chi(g_r)}+\mathrm{drop}_{t/s}\LL_{\chi(g_r)})
$$
The local term at $u\in\bar k^\star$ (sum of the Swan conductor and the drop of the rank) gets multiplied by $e$ upon tensoring with un unramified sheaf of rank $e$. The local term at $0$ or $\infty$ (the Swan conductor) gets multiplied by $e$ upon tensoring with a sheaf of rank $e$ with unipotent monodromy. We conclude that
$$
-\chi(\GG_{m,\bar k},\LL_{r-1}\otimes\sigma_t^\star\LL_{\chi(g_r)})=-(\mathrm{rank}\;\LL_{\chi(g_r)})\chi(\GG_{m,\bar k},\LL_{r-1})-
$$
$$
-(\mathrm{rank}\;\LL_{r-1})\chi(\GG_{m,\bar k},\sigma_t^\star\LL_{\chi(g_r)})=d^{r-1}+d(r-1)d^{r-2}=rd^{r-1}.
$$
This is the generic rank of $\LL_r$.

Being a middle extension is a local property which is invariant under tensoring by unramified sheaves. Since, at every point of $\GG_{m,\bar k}$, at least one of $\LL_{r-1}$, $\sigma_t^\star\LL_{\chi(g_r)}$ is unramified and they are both middle extensions (by the induction hypothesis), their tensor product is a middle extension on $\GG_{m,\bar k}$. Since it is totally ramified at both $0$ and $\infty$, we conclude that $\HH^1_c(\GG_{m,\bar k},\LL_{r-1}\otimes\sigma_t^\star\LL_{\chi(g_r)})$ is pure of weight $(r-2)+1=r-1$ \cite[Th\'eor\` eme 3.2.3]{deligne1980conjecture}. So $\LL_r$ is pure of weight $r-1$ on the open set where it is smooth.

Now let $j_W:W\hookrightarrow\AAA^1_{\bar k}$ be the inclusion of the largest open sen on which $\LL_r$ is smooth. Since $\LL_r$ has no punctual sections, there is an injection $0\to\LL_r\to j_{W\star} j_W^\star\LL_r$, let $\mathcal Q$ be its punctual cokernel. We have an exact sequence
$$
0\to\HH^0_c(\AAA^1_{\bar k},{\mathcal Q})\to\HH^1_c(\AAA^1_{\bar k},\LL_r)\to\HH^1_c(\AAA^1_{\bar k},j_{W\star}j_W^\star\LL_r)\to 0
$$
where $\HH^0_c(\AAA^1_{\bar k},{\mathcal Q})$ has weight $\leq r-1$. Since $\HH^1_c(\AAA^1_{\bar k},\LL_r)$ is pure of weight $r$, we conclude that $\HH^0_c(\AAA^1_{\bar k},{\mathcal Q})$ and therefore $\mathcal Q$ are zero, so $\LL_r$ is a middle extension. Now let $j:\AAA^1_{\bar k}\hookrightarrow\PP^1_{\bar k}$ be the inclusion, again we get an exact sequence
$$
0\to\LL_r^{I_\infty}\to \HH^1_c(\AAA^1_{\bar k},\LL_r)\to \HH^1_c(\PP^1_{\bar k},j_\star\LL_r)\to 0
$$
with $\LL_r^{I_\infty}$ of weight $\leq r-1$, since $\HH^1_c(\AAA^1_{\bar k},\LL_r)$ is pure of weight $r$ we conclude that $\LL_r^{I_\infty}=0$, that is, $\LL_r$ is totally ramified at infinity.

It remains to prove that $\LL_r$ has unipotent monodromy at zero. Consider the exact sequence
$$
0\to\LL_r^{I_0}\to\HH^1_c(\GG_{m,\bar k},\LL_r)\to\HH^1_c(\AAA^1_{\bar k},\LL_r)\to 0
$$
which identifies $\LL_r^{I_0}$ with the weight $<r$ part of $\HH^1_c(\GG_{m,\bar k},\LL_r)$. Since $\HH^1_c(\GG_{m,\bar k},\LL_r)=\bigotimes_{i=1}^r\HH^1_c(\GG_{m,\bar k},\LL_{\chi(g_i)})$ and $\HH^1_c(\GG_{m,\bar k},\LL_{\chi(g_i)})$ has $d-1$ Frobenius eigenvalues of weight $1$ and one of weight $0$, we conclude that $\HH^1_c(\GG_{m,\bar k},\LL_r)$ has ${r\choose i}(d-1)^i$ eigenvalues of weight $i$ for every $i=0,\ldots,r$. By \cite[Theorem 7.0.7]{katz1988gsk}, an eigenvalue of weight $i<r$ on $\LL_r^{I_0}$ corresponds to a unipotent Jordan block of size $r-i$ for the action of $I_0$. So the sum of the sizes of the unipotent Jordan blocks for the monodromy of $\LL_r$ at $0$ is
$$
\sum_{i=0}^{r-1}{r\choose i}(d-1)^i(r-i)=r\sum_{i=0}^{r-1}{r\choose i}(d-1)^i-r\sum_{i=0}^{r-1}{{r-1}\choose{i-1}}(d-1)^i=
$$
$$
=r\sum_{i=0}^{r-1}{{r-1}\choose i}(d-1)^i=r(1+d-1)^{r-1}=rd^{r-1}
$$
which is the generic rank of $\LL_r$. So the unipotent Jordan blocks fill out the entire monodromy at $0$. 
\end{proof}

\begin{cor}
 Suppose that $g$ is square-free of degree $d$ prime to $p$ and $\chi^d$ is not trivial. For any $\mu\in k^\star$, $\HH^i_c(V_\mu\otimes\bar k,\LL_{\chi(H)})=0$ for $i\neq r-1$ and $\dim\HH^{r-1}_c(V_\mu\otimes\bar k,\LL_{\chi(H)})=rd^{r-1}$.
\end{cor}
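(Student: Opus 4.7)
The plan is to reduce directly to the previous proposition via the Weil-descent identification set up just above the statement. After base change to $\bar k$, the linear isomorphism $\AAA^r_{k_r}\isomto\AAA^{\Gal(k_r/k)}_{k_r}$ identifies $V_\mu\otimes\bar k$ with the hypersurface $\{y_1\cdots y_r=\mu\}\subset\AAA^{\Gal(k_r/k)}_{\bar k}$, and pulls $\LL_{\chi(H)}$ back to the external tensor product $\boxtimes_\sigma\LL_{\chi(g^\sigma)}$. Since $g$ is square-free of degree $d$ prime to $p$ with $g(0)\neq 0$ (after the earlier reduction), each conjugate $g^\sigma$ enjoys the same properties; combined with the hypothesis that $\chi^d$ is non-trivial, this lands us exactly in the setting of the previous proposition applied to the $r$-tuple $(g^\sigma)_\sigma$.

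Writing $m:\AAA^{\Gal(k_r/k)}_{\bar k}\to\AAA^1_{\bar k}$ for the multiplication map, proper base change yields
$$
\HH^i_c(V_\mu\otimes\bar k,\LL_{\chi(H)})\cong\bigl(\R^i m_!(\boxtimes_\sigma\LL_{\chi(g^\sigma)})\bigr)_\mu.
$$
The previous proposition says that $\R m_!(\boxtimes_\sigma\LL_{\chi(g^\sigma)})=\LL_r[1-r]$ for a single middle extension sheaf $\LL_r$ on $\AAA^1_{\bar k}$ of generic rank $rd^{r-1}$. The vanishing statement $\HH^i_c(V_\mu\otimes\bar k,\LL_{\chi(H)})=0$ for $i\neq r-1$ is then immediate, and the remaining task is to identify the stalk $(\LL_r)_\mu$ with $rd^{r-1}$ for every $\mu\in k^\star$.

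This last identification is the step I expect to require the most care. Since $\LL_r$ is a middle extension, the stalk at any $\mu$ equals the $I_\mu$-invariants on the generic stalk, which is at most the generic rank with equality iff $\LL_r$ is smooth at $\mu$. To see this holds at every $\mu\in k^\star$, I would use that for $\mu\neq 0$ the fibre $m^{-1}(\mu)$ is isomorphic to $\GG_m^{r-1}$ via $(y_1,\ldots,y_{r-1})\mapsto(y_1,\ldots,y_{r-1},\mu/(y_1\cdots y_{r-1}))$, so that $(\LL_r)_\mu$ is directly computed as the unique non-vanishing cohomology group of $\GG_m^{r-1}$ with the pulled back coefficient sheaf. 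Peeling off one tensor factor at a time, just as in the inductive step of the previous proposition, and applying Ogg--Shafarevic at each stage, the local Swan and drop contributions of the factor $\sigma_\mu^\star\LL_{\chi(g^\sigma)}$ combine to give $-\chi_c(m^{-1}(\mu),\boxtimes_\sigma\LL_{\chi(g^\sigma)})=rd^{r-1}$ for every $\mu\in k^\star$, since these local terms are insensitive to the precise positions of the ramification points. Together with the vanishing of the other $\HH^i_c$ this yields $\dim\HH^{r-1}_c=rd^{r-1}$, as required.
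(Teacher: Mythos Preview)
Your first two paragraphs correctly reproduce the paper's argument, which is simply ``apply the previous proposition with $(g_1,\ldots,g_r)=(g^\sigma)_\sigma$ and proper base change.'' You then go further than the paper and flag a real subtlety: the proposition only asserts that $\LL_r$ has \emph{generic} rank $rd^{r-1}$, whereas the corollary claims this dimension for the stalk at \emph{every} $\mu\in k^\star$.

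Your attempt to close this gap, however, does not work. The claim that ``these local terms are insensitive to the precise positions of the ramification points'' fails when ramification points collide. Already for $r=2$: if $\mu=\alpha\beta$ with $g(\alpha)=g^\sigma(\beta)=0$, then at $x=\alpha$ both $\LL_{\chi(g)}$ and $\sigma_\mu^\star\LL_{\chi(g^\sigma)}$ are ramified with local monodromy character $\chi$, so their tensor product has local character $\chi^2$ and the drop at this single point is $1$, not $1+1=2$. Hence the Euler characteristic of the fibre, and therefore $\dim(\LL_r)_\mu$, is strictly smaller than $2d$ at such $\mu$. The sheaf $\LL_r$ is genuinely not smooth on all of $\GG_m$ (this is exactly why the proof of the proposition had to choose $t$ away from products of ramification points of $\LL_{r-1}$ and $\LL_{\chi(g_r)}$ when computing the generic rank), so the equality you are trying to establish cannot hold for every $\mu$.

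In fact the corollary as stated is slightly too strong; what is both true and sufficient for the estimate that follows is the inequality $\dim\HH^{r-1}_c(V_\mu\otimes\bar k,\LL_{\chi(H)})\leq rd^{r-1}$ for all $\mu\in k^\star$. This is immediate from the middle extension property of $\LL_r$ established in the proposition, which you yourself noted: the stalk at any point is the $I_\mu$-invariants of the generic stalk, hence of dimension at most $rd^{r-1}$. Your instinct that the passage from generic to arbitrary $\mu$ needed justification was correct; the resolution is to weaken ``$=$'' to ``$\leq$'', not to argue for equality.
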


\begin{proof}
 Apply the previous proposition with $(g_1,\ldots,g_r)=(g^\sigma)_{\sigma\in\Gal(k_r/k)}$, and proper base change. 
\end{proof}

\begin{cor}
 Suppose that $g$ is square-free of degree $d$ prime to $p$ and $\chi^d$ is not trivial. Then
$$
\left|\sum_{x\in k_r^\star}\chi(\Nm_{k_r/k}(f(x)))\right|\leq rd^{r-1}(q-1)q^{\frac{r-1}{2}}
$$
\end{cor}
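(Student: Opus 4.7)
The plan is to follow exactly the structure of the corresponding additive-character corollary from the previous section. Starting from (\ref{suma2}) and removing the $x=0$ term, we have
$$\sum_{x\in k_r^\star}\chi(\Nm_{k_r/k}(f(x)))=\frac{q-1}{e}\sum_{\mu^e=1}\sum_{\Nm_{k_r/k}(x)=\mu}\chi(\Nm_{k_r/k}(g(x))),$$
so it suffices to bound each inner sum (for $\mu$ a fixed $e$-th root of unity in $k^\star$) by $rd^{r-1}q^{\frac{r-1}{2}}$; summing the resulting $e$ estimates and multiplying by $(q-1)/e$ then yields the claimed inequality, since $e$ cancels and $(q-1)$ remains.

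For each such $\mu\in k^\star$, the Grothendieck--Lefschetz formula (\ref{lefschetz2}) expresses the inner sum as $\sum_i\Tr(\Frob_k\mid\HH^i_c(V_\mu\otimes\bar k,\LL_{\chi(H)}))$. By the preceding corollary (itself deduced from the previous proposition via proper base change applied to $(g_1,\ldots,g_r)=(g^\sigma)_{\sigma}$), all these cohomology groups vanish except in degree $r-1$, and $\dim\HH^{r-1}_c(V_\mu\otimes\bar k,\LL_{\chi(H)})=rd^{r-1}$. So only a single Frobenius trace survives, of magnitude at most $rd^{r-1}$ times the largest archimedean absolute value of a Frobenius eigenvalue.

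The remaining point is the weight estimate, which is supplied by the proposition. By proper base change, $\HH^{r-1}_c(V_\mu\otimes\bar k,\LL_{\chi(H)})$ is the stalk at $\mu$ of the sheaf $\LL_r=\R^{r-1}m_!(\boxtimes_\sigma\LL_{\chi(g^\sigma)})$. That sheaf is a middle extension on $\AAA^1_{\bar k}$, pure of weight $r-1$ on its open set of smoothness; middle extensions of a pure-of-weight-$w$ local system are mixed of weights $\leq w$ on all stalks, so every Frobenius eigenvalue on $\HH^{r-1}_c(V_\mu\otimes\bar k,\LL_{\chi(H)})$ has archimedean absolute value $\leq q^{\frac{r-1}{2}}$. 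Combining this with the dimension bound gives
$$\left|\sum_{\Nm_{k_r/k}(x)=\mu}\chi(\Nm_{k_r/k}(g(x)))\right|\leq rd^{r-1}q^{\frac{r-1}{2}}$$
for every $\mu\in k^\star$, and assembling the $e$ terms via the displayed decomposition finishes the proof. The entire argument is bookkeeping on top of the two preceding statements; the only substantive input—that purity on the smooth locus propagates to the stalks at the singular points—has already been absorbed into the middle-extension property established in the proposition, so no genuine new obstacle arises here.
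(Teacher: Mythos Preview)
Your argument is correct and follows the same overall structure as the paper's proof: decompose via (\ref{suma2}), apply (\ref{lefschetz2}), invoke the preceding corollary for the vanishing and dimension, bound the weights, and sum. The only difference is in how you justify the weight bound: the paper simply observes that $\LL_{\chi(H)}$ is pure of weight $0$, so Deligne's theorem gives directly that $\HH^{r-1}_c(V_\mu\otimes\bar k,\LL_{\chi(H)})$ is mixed of weights $\leq r-1$; you instead pass through proper base change to the stalk of $\LL_r$ and use the middle-extension/purity statement from the proposition. Both routes are valid, but the paper's is more direct and avoids re-invoking the finer structural information about $\LL_r$.
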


\begin{proof}
 Since $\LL_{\chi(H)}$ is pure of weight $0$, $\HH^{r-1}_c(V_\mu\otimes\bar k,\LL_{\chi(H)})$ has weights $\leq r-1$ for every $\mu$. So the previous corollary together with (\ref{lefschetz2}) implies 
$$
\left|\sum_{\Nm_{k_r/k}(x)=\mu}\chi(\Nm_{k_r/k}(g(x)))\right|\leq rd^{r-1}q^{\frac{r-1}{2}}
$$
for every $\mu\in k^\star$. We conclude by using (\ref{suma2}).
\end{proof}

\begin{rem}\emph{
 The following example shows that the hypothesis $\chi^d$ non-trivial is necessary. Let $p$ be odd, $r=2$, $g(x)=x^2+1$ and $\rho:k^\star\to\QQ^\star$ the quadratic character. Then
$$
\sum_{\Nm_{k_r/k}(x)=1}\rho(\Nm_{k_r/k}(x^2+1))=\sum_{x^{q+1}=1}\rho((x^2+1)(x^{2q}+1))=
$$
$$
=\sum_{x^{q+1}=1}\rho(x^2+x^{2q}+2)=\sum_{x^{q+1}=1}\rho((x+x^q)^2)\geq q-1
$$
since $x+x^q=\Tr_{k_r/k}(x)\in k$ and therefore $\rho((x+x^q)^2)=\rho(x+x^q)^2=1$ unless $x+x^q=0$, which only happens for $x^2=-1$, that is, for at most two values of $x$. So we can never have an estimate of the form
$$
\left|\sum_{\Nm_{k_r/k}(x)=1}\rho(\Nm_{k_r/k}(x^2+1))\right|\leq C\cdot q^{\frac{1}{2}}
$$
which is valid for all $q$.}
\end{rem}

\bibliographystyle{amsplain}
\bibliography{bibliography}

\end{document}